\DeclareMathOperator{\obs}{obs}
\newtheorem{theorem}{Theorem}
\newtheorem*{theorem*}{Theorem}
\newtheorem{lemma}[theorem]{Lemma}
\newtheorem{observation}[theorem]{Observation}
\newtheorem{proposition}[theorem]{Proposition}
\newtheorem{corollary}[theorem]{Corollary}
\newtheorem*{question}{Question}
\begin{document}

\begin{frontmatter}

\title{Full-homomorphisms to paths and cycles}


\author[FC]{Santiago Guzm\'an-Pro}\corref{cor1}
\ead{sanguzpro@ciencias.unam.mx}

\address[FC]{Facultad de Ciencias\\
Universidad Nacional Aut\'onoma de M\'exico\\
Av. Universidad 3000, Circuito Exterior S/N\\
C.P. 04510, Ciudad Universitaria, CDMX, M\'exico}

\cortext[cor1]{Corresponding author}

\begin{abstract}
A full-homomorphism between a pair of graphs  is a vertex
mapping that preserves adjacencies and non-adjacencies. For a fixed
graph $H$, a full $H$-colouring is a full-homomorphism of $G$ to $H$.
A minimal $H$-obstruction is a graph that does not admit a full $H$-colouring,
such that every proper induced subgraph of $G$ admits a full $H$-colouring.
Feder and Hell proved that for every graph $H$ there is a finite number
of minimal  $H$-obstructions. We begin this work by describing all minimal
obstructions of paths. Then, we study minimal obstructions of regular graphs
to propose a description of minimal obstructions of cycles.
As a consequence of these results, we observe that for each path $P$ and
each cycle $C$, the number of minimal $P$-obstructions and $C$-obstructions
is  $\mathcal{O}(|V(P)|^2)$ and $\mathcal{O}(|V(C)|^2)$, respectively. 
Finally, we propose some problems regarding the largest minimal
$H$-obstructions, and the number of minimal $H$-obstructions.

\end{abstract}

\begin{keyword}
Full-homomorphism \sep full $H$-colouring \sep minimal obstructions \sep
point-determining graphs
\MSC[2020] 05C15 \sep 05C75
\end{keyword}

\end{frontmatter}


\section{Introduction}

All graphs considered in this work are finite graphs with no parallel edges. Later, 
we will further restrict ourselves to loopless graphs. For standard notions of
Graph Theory we refer the reader to \cite{bondy2008}. In particular, for $n\ge 3$,
 we denote by $P_n$  (resp.\ $C_n$) the path (resp.\ cycle) on $n$ vertices.

Given a pair of graphs $G$ and $H$ a \textit{full-homomorphism}
$\varphi\colon G\to H$ is a vertex mapping such that for each pair of
vertices $x,y\in V(G)$ there is an edge $xy\in E(G)$ if and only if
$\varphi(x)\varphi(y)\in E(G)$. In particular, if $H$ is a simple graph, then
adjacent vertices in $G$ are mapped to different vertices in $H$. Moreover,
if $\varphi(x)  = \varphi(y)$, then $x$ and $y$ have the same neighbourhood
in $G$. 

For a fixed graph $H$, a \textit{full $H$-colouring} of a graph $G$
is a full-homomorphism of $G$ to $H$. A \textit{minimal $H$-obstruction} is a graph
that does not admit a full $H$-colouring, such that every proper induced subgraph
of $G$ admits a full $H$-colouring. We denote by $\obs(H)$ the set of minimal
$H$-obstructions. In~\cite{federDM308}, Feder and Hell showed that for a graph $H$
with $l$ vertices with loops, and $k$ vertices without loops, every graph in $\obs(H)$
has at most $(k+1)(l+1)$ vertices, and this bound is tight. Later, Hell and
Hern\'andez-Cruz showed that the same tight bound holds in the case of
digraphs~\cite{hellDM338}. Independently and in a more 
general setting, Ball, Ne\v{s}et\v{r}il, and Pultr~\cite{ballEJC31}, proved that
for each relational structure $A$, there are a finite number
of minimal $A$-obstructions. Each of these results imply
that for every simple graph $H$ there are finitely many minimal $H$-obstructions.

\begin{proposition}\label{prop:finite-minimal}\cite{ballEJC31,federDM308,hellDM338}
For each graph $H$ there is a finite number of minimal $H$-obstructions.
\end{proposition}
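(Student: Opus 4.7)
The plan is to bound the number of vertices of any minimal $H$-obstruction in terms of $|V(H)|$. The guiding principle is that the structure imposed by a full-homomorphism is very rigid---each preimage of a vertex of $H$ is a twin-class---so a minimal obstruction to full $H$-colouring can deviate from this rigid structure in only a bounded way.

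First I would record the structure of a full-homomorphism $\psi \colon F \to H$: for each $v \in V(H)$ the preimage $\psi^{-1}(v)$ is a set of pairwise twins in $F$, forming an independent set if $v$ is loopless and a clique if $v$ has a loop. Between distinct classes $\psi^{-1}(u)$ and $\psi^{-1}(v)$ the bipartite subgraph of $F$ is complete when $uv \in E(H)$ and empty otherwise. Thus $F$ decomposes as a blow-up of an induced subgraph of $H$ by twin-modules of the prescribed type.

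Now let $G \in \obs(H)$, fix any $z \in V(G)$, and apply minimality to obtain a full-homomorphism $\varphi \colon G - z \to H$. This partitions $V(G - z)$ into at most $|V(H)|$ twin-classes $C_v = \varphi^{-1}(v)$. Two vertices $a, b \in C_v$ are already twins in $G - z$; in $G$ they remain twins unless they differ in adjacency to $z$. A twin-reduction argument---if two vertices are twins in $G$, try to extend a full-homomorphism of $G$ minus one of them back to all of $G$---then caps how many vertices may lie in each refined twin-class of $G$, yielding a bound of the form $|V(G)| \le f(|V(H)|)$.

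The main obstacle is controlling the twin-reduction step, which must handle separately the case in which the twin-class maps to a looped vertex of $H$ (so the reduced full-homomorphism cannot be extended merely by reusing the image, because the self-loop would force an unwanted edge between the deleted twin and its partner) versus the loopless case, and in which an alternative image in $H$ with the correct adjacency pattern may or may not exist. A careful case analysis along these lines recovers the sharp bound $|V(G)| \le (k+1)(l+1)$ of Feder and Hell~\cite{federDM308}, where $k$ and $l$ count the loopless and looped vertices of $H$; for mere finiteness, any polynomial bound in $|V(H)|$ suffices.
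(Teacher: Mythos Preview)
Your plan is sound and follows the Feder--Hell route that the paper cites but does not reproduce: \cref{prop:finite-minimal} is stated as a known result with references~\cite{ballEJC31,federDM308,hellDM338} and carries no proof in the paper itself.

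That said, the paper does contain, in the paragraph preceding \cref{prop:obs*}, a much shorter argument for the loopless case it actually works in. Rather than deleting an arbitrary vertex $z$ and then bounding the sizes of the twin-classes of $G-z$, the paper invokes Sumner's theorem (\cref{prop:rem-v}) to choose a \emph{specific} vertex $v$ for which $G-v$ remains point-determining. By the first part of \cref{lem:point-det} the full $H$-colouring of $G-v$ guaranteed by minimality is then injective, so $|V(G)|\le |V(H)|+1$ immediately, with no class-size analysis at all.

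Your approach, by contrast, is aimed at the general case with loops, where the Sumner shortcut is not enough: true twins can persist, and the extension step you flag as the ``main obstacle'' genuinely requires the looped/loopless case split on the target vertex. What you buy is the full $(k+1)(l+1)$ bound for arbitrary $H$; what the paper's internal argument buys is a two-line proof and the sharp bound $|V(H)|+1$ in the loopless setting it restricts to. Both are correct; yours reproduces the cited source, while the paper's own reasoning, where it appears, takes the Sumner shortcut.
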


Furthermore, Ball, Ne\v{s}et\v{r}il, and Pultr \cite{ballEJC31} describe the
connected minimal obstructions of paths and cycles, i.e., the connected
graphs in $\obs(C_n)$ and in $\obs(P_n)$. They also propose a
recursive description of disconnected minimal $P_n$-obstructions,
but the ``lists corresponding to the paths do not seem to be more transparent
than those in the connected case'' \cite{ballEJC31}. In this work, we propose
a transparent description of the list of disconnected minimal obstructions
of paths. We do so by means of positive solutions to integer equations. In
particular,  we list all minimal $P_n$-obstructions, and we build on this
description to propose the complete list of minimal obstructions for cycles.

The rest of this work is structured as follows.  First, in
Section~\ref{sec:paths} we propose a description of minimal
$P_n$-obstructions. In Section~\ref{sec:cycles}, we make some
general observations regarding minimal obstructions of regular graphs,
and use these to propose a description of minimal $C_n$-obstructions
in terms of minimal $P_{n-1}$-obstructions. We conclude
this work in Section~\ref{sec:conclusions} where we propose some problems
that arise from observations in Section~\ref{sec:cycles}.
The rest of this section contains some preliminary results needed for this work.

From this point onwards, we only consider loopless finite graphs.
 A pair of vertices $x$ and $y$ of a graph $G$ are called
\textit{false twins}  if $N(x) = N(y)$, and \textit{true twins} of $N[x] = N[y]$. In
particular, every pair of true twins are adjacent, while every pair of false twins are
non-adjacent.  In \cite{sumnerDM5}, Sumner defined a \textit{point-determining} graph
as a graph for which non adjacent vertices have distinct neighbourhoods, i.e., a graph
$G$ is point-determining if it has no pair of false twins.

\begin{proposition}\cite{sumnerDM5}\label{prop:rem-v}
For every non trivial point-determining graph $G$ there is a vertex $v\in V(G)$
such  that $G-v$ is point-determining. Moreover, if $G$ is connected, then 
there are two distinct vertices with that property.
\end{proposition}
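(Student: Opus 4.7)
The plan is to characterize which vertices can be removed while preserving point-determination, and then to locate such vertices via an extremal argument on neighborhood inclusion. First I would observe that $G - v$ is point-determining if and only if no non-adjacent pair $\{x, y\} \subseteq V(G)$ satisfies $N(x) \triangle N(y) = \{v\}$: a false-twin pair $\{x, y\}$ in $G - v$ has $N_{G-v}(x) = N_{G-v}(y)$, so $N(x)$ and $N(y)$ agree off $v$, and the point-determination of $G$ forbids their equality. Call $v$ \emph{good} when this condition holds and \emph{bad} otherwise.

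Next I would exploit the injectivity of $v \mapsto N(v)$ in a loopless point-determining graph: adjacent vertices with equal open neighborhoods would force loops, and non-adjacent ones would be false twins. This makes $u \prec w \iff N(u) \subsetneq N(w)$ a well-defined strict partial order on $V(G)$, and $v$ is bad exactly when some covering pair $y \prec x$ in $\prec$ satisfies $x \not\sim y$ and $N(x) \setminus N(y) = \{v\}$.

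For the first part of the proposition, I would choose $v^*$ with $N(v^*)$ inclusion-maximal. If $v^*$ were bad, with witness pair $(a, b)$ and $v^* \in N(a) \setminus N(b)$, then injectivity and maximality force $N(v^*) \not\subseteq N(a)$, producing some $w \in N(v^*)$ with $w \not\sim a, b$. I would then split on the size of the true-twin class $T(v^*)$: if $|T(v^*)| \geq 3$, pigeonholing over $T(v^*) \setminus \{v^*\}$ gives at least two elements of $N(a) \setminus N(b)$ besides $v^*$, contradicting $|N(a) \triangle N(b)| = 1$; otherwise, iterating with $w$ in place of $v^*$ and using the finiteness of $V(G)$ together with the strictness of $\prec$ yields a contradiction.

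For the moreover clause, with $G$ connected, I would run the extremal argument a second time to obtain a second good vertex: either there are at least two distinct $\prec$-maximal vertices (each good by the preceding analysis), or a single such vertex sits above all others and connectivity of $G$ forces its true-twin class structure, from which a second good vertex can be extracted. The hard part will be the iteration in the third paragraph: the extracted $w$ does not immediately contradict the extremal choice of $v^*$, so the chain through the twin-class analysis must be carefully set up so that it terminates in a bona fide contradiction.
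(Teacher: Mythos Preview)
The paper does not prove this proposition; it is quoted from Sumner's 1973 paper and stated without argument. So there is no proof in the paper to compare against. Your proposal, however, has a genuine gap.

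Your plan is to select $v^*$ with $N(v^*)$ inclusion-maximal and then argue by contradiction that $v^*$ is good. But inclusion-maximality of the neighbourhood does not force goodness. Take $G=P_5$ on vertices $1,\dots,5$ in path order: $N(4)=\{3,5\}$ is inclusion-maximal, yet in $G-4$ the vertices $1$ and $3$ both have neighbourhood $\{2\}$, so $4$ is bad. Running your argument on this example with $v^*=4$, the witness pair is $(a,b)=(3,1)$, the auxiliary vertex is $w=5$, and $|T(v^*)|=1$; you therefore land in the ``otherwise, iterate with $w$'' branch. No contradiction can be reached, because a maximal-neighbourhood vertex being bad is simply a true state of affairs in $P_5$. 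The iteration step is also not well-posed: $N(w)=N(5)=\{4\}$ is inclusion-\emph{minimal}, so the maximality hypothesis your first step relied on is lost immediately, and you have not specified what invariant is maintained or what quantity strictly decreases along the iteration.

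The structural problem is that you are attempting to prove a \emph{fixed} extremal vertex is good, whereas the proposition only asserts that \emph{some} vertex is good. In $P_5$ the good vertices are $1,3,5$ and the bad ones are $2,4$; the $\prec$-maximal vertices are $2,3,4$, so your extremal criterion may hand you either a good or a bad vertex, and nothing in your outline distinguishes $3$ from $2$ or $4$. The same defect propagates to your ``moreover'' clause, which presumes that every $\prec$-maximal vertex is good. You will need a different extremal choice (Sumner's argument does not proceed via a single inclusion-maximal neighbourhood), or an argument that analyses the covering relations of the neighbourhood poset globally rather than locally at one vertex.
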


A pair of graphs $G$ and $H$ are \textit{full-homomorphically equivalent} if
$G$ admits a full $H$-colouring and $H$ admits a full $G$-colouring.
A \textit{core} in the category of graphs with full-homomorphisms, is a graph
$G$ such that every full-homomorphism $\varphi\colon G\to G$ is surjective.
It is not hard to see that for each graph $H$, there is a unique (up to isomorphism)
core $G$ full-homomorphically equivalent to $H$. In this case, we
say that $G$ is the \textit{full-core} of $H$. 

Point-determining graphs play an important role in the category of graphs with
full-homomorphisms. In particular, every core in the full-homomorphism 
category of graphs is a point-determining graph. Indeed, suppose that $x$ and
$y$ are a pair of false twins in a graph $G$. By mapping $x$ to $y$, we obtain a
full-homomorphism of $G$ onto a proper subgraph, which implies that $G$ is not
a core.  Moreover, the same argument also implies that if $G$ 
is a minimal $H$-obstruction for some graph $H$, then $G$ is a
point-determining graph. Finally,  it is also straightforward to see that if $G$ is
a point-determining graph,  then each full-homomorphism whose domain
is $G$ is an injective mapping. 
The following statement captures two of the facts argued in this paragraph.

\begin{lemma}\label{lem:point-det}
The following statements hold for any pair of graphs $G$ and $H$:
\begin{enumerate}
	\item If $G$ is point-determining, then every full-homomorphism
	$\varphi\colon G\to H$ is injective.
	\item If $G\in \obs(H)$, then $G$ is a point-determining graph.
\hfill $\square$
\end{enumerate}
\end{lemma}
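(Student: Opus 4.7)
The plan is to prove each statement by contraposition, unpacking the definitions of false twin and full-homomorphism. Both arguments are sketched informally in the paragraph preceding the lemma, so the proof mostly makes them precise, using crucially that we are in the loopless setting from this point on.

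For statement (1), I would assume $\varphi\colon G\to H$ is a full-homomorphism that fails to be injective and exhibit a pair of false twins in $G$. Fix $x\neq y$ with $\varphi(x)=\varphi(y)$. Since $H$ is loopless, $\varphi(x)\varphi(x)\notin E(H)$, and the full-homomorphism condition immediately gives $xy\notin E(G)$. For any other vertex $z$, the chain $xz\in E(G)\iff \varphi(x)\varphi(z)\in E(H)\iff \varphi(y)\varphi(z)\in E(H)\iff yz\in E(G)$ shows $N(x)=N(y)$. Hence $x,y$ are false twins, contradicting the assumption that $G$ is point-determining.

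For statement (2), I would assume $G$ has a pair $x,y$ of false twins and contradict minimality of $G$ as an $H$-obstruction. Define $\pi\colon G\to G-x$ as the identity on $V(G)\setminus\{x\}$, sending $x$ to $y$. A direct check, using both $N(x)=N(y)$ and the fact that in the loopless setting false twins are non-adjacent, shows $\pi$ is a full-homomorphism. Since $G-x$ is a proper induced subgraph of $G$, by minimality it admits a full $H$-colouring $\psi$. The composition $\psi\circ\pi$ is then a full $H$-colouring of $G$, contradicting $G\in\obs(H)$; here I use the easy fact that the composition of two full-homomorphisms is a full-homomorphism, again by a one-line check of the biconditional on edges.

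The argument has no real obstacle: the two facts doing the work are that looplessness rules out the pathological case of ``adjacent false twins'', and that full-homomorphisms compose. The only mild care needed is to notice that the mapping $\pi$ above is well-defined as a map into $G-x$, which is immediate once we know $xy\notin E(G)$ so that collapsing $x$ onto $y$ does not demand a loop in the image.
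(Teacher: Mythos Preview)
Your proposal is correct and follows precisely the approach sketched in the paragraph preceding the lemma (the paper itself omits a formal proof, as indicated by the $\square$). You have simply made explicit the two contrapositive arguments the paper outlines---collapsing a pair of false twins to produce a full-homomorphism onto a proper induced subgraph for (2), and reading off $N(x)=N(y)$ from $\varphi(x)=\varphi(y)$ for (1)---so there is nothing to add.
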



\section{Path obstructions}
\label{sec:paths}

In this section, we describe the minimal $P$-obstructions when $P$ is a path.
We begin by describing some particular minimal $P$-obstructions. To do so,
we introduce the graphs $A$, $B$ and $E$ depicted in \cref{fig:ABE}.

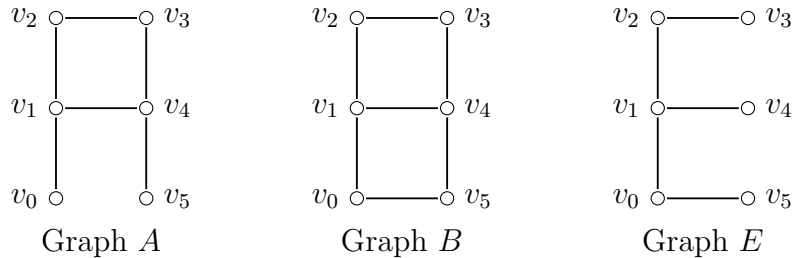
\begin{figure}[ht!]
\begin{center}
\begin{tikzpicture}
[every circle node/.style ={circle,draw,minimum size= 5pt,inner sep=0pt, outer sep=0pt},
every rectangle node/.style ={}];

\begin{scope}[xshift = 4cm, scale=0.4]
\node [circle] (1) at (-1.5,0)[label=left:$v_0$]{};
\node [circle] (2) at (-1.5,3)[label=left:$v_1$]{};
\node [circle] (3) at (-1.5,6)[label=left:$v_2$]{};
\node [circle] (4) at (1.5,6)[label=right:$v_3$]{};
\node [circle] (5) at (1.5,3)[label=right:$v_4$]{};
\node [circle] (6) at (1.5,0)[label=right:$v_5$]{};
\foreach \from/\to in {1/2, 2/3, 1/6, 2/5, 3/4}
\draw [-, shorten <=1pt, shorten >=1pt, >=stealth, line width=.7pt] (\from) to (\to);
\node [rectangle] at (0,-1.5){Graph $E$};

\end{scope}

\begin{scope}[xshift=-4cm, scale = 0.4]
\node [circle] (1) at (-1.5,0)[label=left:$v_0$]{};
\node [circle] (2) at (-1.5,3)[label=left:$v_1$]{};
\node [circle] (3) at (-1.5,6)[label=left:$v_2$]{};
\node [circle] (4) at (1.5,6)[label=right:$v_3$]{};
\node [circle] (5) at (1.5,3)[label=right:$v_4$]{};
\node [circle] (6) at (1.5,0)[label=right:$v_5$]{};
\foreach \from/\to in {1/2,2/3,3/4,4/5,5/6,2/5}
\draw [-, shorten <=1pt, shorten >=1pt, >=stealth, line width=.7pt] (\from) to (\to);
\node [rectangle] (1) at (0,-1.5){Graph $A$};

\end{scope}
\begin{scope}[xshift=0cm, scale=0.4]
\node [circle] (1) at (-1.5,0)[label=left:$v_0$]{};
\node [circle] (2) at (-1.5,3)[label=left:$v_1$]{};
\node [circle] (3) at (-1.5,6)[label=left:$v_2$]{};
\node [circle] (4) at (1.5,6)[label=right:$v_3$]{};
\node [circle] (5) at (1.5,3)[label=right:$v_4$]{};
\node [circle] (6) at (1.5,0)[label=right:$v_5$]{};
\foreach \from/\to in {1/2,2/3,3/4,4/5,5/6,2/5,1/6}
\draw [-, shorten <=1pt, shorten >=1pt, >=stealth, line width=.7pt] (\from) to (\to);
\node [rectangle] (1) at (0,-1.5){Graph $B$};
\end{scope}
\end{tikzpicture}
\end{center}
\caption{For a path $P$, every minimal $P$-obstruction that is neither a linear forest
or a cycle, is one of these graphs (\cref{lem:3possibilties}).}
\label{fig:ABE}
\end{figure}

Recall that for $n \ge 3$, we denote by $C_n$ (resp.\ by $P_n$) the cycle
(resp.\ path) on $n$ vertices; we denote by $K_1$ and $K_2$ the paths on
one and two vertices, respectively. In general,
we denote by $K_n$ the complete graph on $n$ vertices.

\begin{lemma}\label{lem:Cn-On}
For every positive integer $n$, the following statements hold:
\begin{enumerate}
	\item The graph $A$ is a minimal $P_n$-obstruction if and only if $n \ge 6$.
	\item The graph $B$ is a minimal $P_n$-obstruction if and only if $n \ge 5$.
	\item The graph $E$ is a minimal $P_n$-obstruction if and only if $n \ge 7$.
	\item The $m$-cycle is a minimal $P_n$-obstruction if and only if
	$m = 3$ or $5\le m \le n+1$.
\end{enumerate}
\end{lemma}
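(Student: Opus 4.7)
The approach is to verify, for each of the four parts, the two conditions defining minimality: (a) the candidate graph $G$ does not admit a full $P_n$-coloring, and (b) every proper induced subgraph $G-v$ does admit one, for the claimed values of $n$. By \cref{lem:point-det}(1), once $G$ (or the false-twin quotient of $G-v$) is shown to be point-determining, a full $P_n$-coloring is the same as an induced-subgraph embedding into $P_n$. The induced subgraphs of $P_n$ are exactly the disjoint unions of paths $P_{s_1}+\cdots+P_{s_c}$ with $s_1+\cdots+s_c+(c-1)\le n$, so once the twin-quotient of $G-v$ is identified as such a disjoint union, the required $n$ is read off immediately.

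Point-determining-ness of $A$, $B$, and $E$ will follow from a direct inspection of the six neighborhoods in \cref{fig:ABE}, which are pairwise distinct in each case. Each of $A$ and $E$ has $v_1$ of degree three, and $B$ contains the $4$-cycle $v_0v_1v_4v_5$, so none of the three is an induced subgraph of any path; this verifies (a) uniformly in $n$. Similarly, $C_m$ for $m\neq 4$ is point-determining with pairwise distinct vertex neighborhoods and is not an induced subgraph of any path, while $C_4$ has opposite vertices as false twins and admits a full $K_2$-coloring, so $C_4$ is never a minimal $P_n$-obstruction.

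For $A$ and $B$ the involution $v_i\mapsto v_{5-i}$ reduces condition (b) to three cases each. For $A$: $A-v_0$ and $A-v_2$ produce graphs with a pair of false twins whose twin-quotients are both $P_4$, while the binding case is $A-v_1=P_4+K_1$, forcing $n\ge 6$. For $B$, the analogous three representative deletions yield twin-quotients $P_4$, $P_4$, and the path $P_5=B-v_1$; the binding case is $P_5$, forcing $n\ge 5$. For $E$ no symmetry is available, so I will work through all six deletions: $E-v_1$ equals $2K_2+K_1$ and demands $n\ge 2+1+2+1+1=7$; $E-v_0$ and $E-v_2$ equal $P_4+K_1$ (demanding $n\ge 6$); and the remaining three deletions have twin-quotient $P_5$ or $P_4$. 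Thus the threshold for $E$ is $n\ge 7$.

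Finally, for the cycles, $C_m-v=P_{m-1}$, which embeds in $P_n$ if and only if $n\ge m-1$, i.e.\ $m\le n+1$. Combined with the exclusion of $C_4$ (as above) and the fact that $C_m$ itself is not an induced subgraph of any path, this gives condition (b) exactly for $m=3$ or $5\le m\le n+1$. I expect the principal obstacle to be the case analysis for $E$, which must be carried out exhaustively over all six vertex deletions because $E$ has no nontrivial automorphism to shorten the work; however, each individual case is a routine matter of computing neighborhoods and identifying the false-twin quotient.
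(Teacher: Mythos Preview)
Your proof is correct and follows the same verification strategy as the paper's (check that $A$, $B$, $E$, and the cycles are point-determining and not full $P$-colourable, then locate the threshold $n$ via the worst vertex deletion), though worked out in considerably more detail than the paper's terse sketch. One minor inaccuracy: $E$ does admit the nontrivial automorphism $v_0\leftrightarrow v_2$, $v_3\leftrightarrow v_5$ (swapping the two length-$2$ arms of the spider), which would reduce your six cases to four; since you handle all six explicitly this does not affect correctness.
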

\begin{proof}
All graphs in statements 1--3 are point-determining graphs that do not
admit a full $P$-colouring for any path $P$. By removing $v_4$ from $A$, we
obtain $K_1+P_4$ which is not full $P_5$-colourable, so $A$ is not a minimal
$P_n$ obstruction for any $n\le 5$. On the other hand, any induced subgraph
of $A$ admits a full $P_6$-colouring, and thus, it is a minimal $P_n$-obstruction
for every $n\ge 6$.  Similarly, $B-v_1$ is not full $P_5$-colourable, and
$E-v_1$ is not full $P_6$-colourable. Also,
every proper induced subgraph of $B$ is full $P_6$-colourable, and
every proper induced subgraph of $E$ is full $P_7$-colourable. Hence,
$B$ is a minimal $P_n$-obstruction if and only if $n\ge 6$, and 
$E$ is a minimal $P_n$-obstruction if and only if $n\ge 7$.  
The last statement is clearly true.
\end{proof}

Now, we observe that each path minimal obstructions is either a graph
mentioned in \cref{lem:Cn-On} or a linear forest.

\begin{lemma}\label{lem:3possibilties}
Consider a path $P$ and a graph $G$. If $G\in \obs(P)$ then one of the
following statements holds:
\begin{enumerate}
	\item $G$ is a cycle.
	\item $G$ is a linear forests.
	\item $G$ is one of the graphs $A$, $B$ or $E$.
\end{enumerate}
\end{lemma}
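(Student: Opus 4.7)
The plan is to exploit the simple structure of full $P$-colourable graphs. Since $P = P_n$ is itself point-determining, any full-homomorphism $\varphi\colon H \to P$ has fibres that coincide with the false-twin equivalence classes of $H$: each fibre is an independent set, and two fibres corresponding to adjacent vertices of $P$ span a complete bipartite subgraph. Equivalently, $H$ admits a full $P$-colouring if and only if the quotient $H/{\sim}$ of $H$ by false-twin equivalence is isomorphic to an induced subgraph of $P$.

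Now let $G \in \obs(P)$. By \cref{lem:point-det}, $G$ is point-determining, and for every $v \in V(G)$ the quotient $(G-v)/{\sim}$ is an induced subgraph of $P$. The crucial observation is that, since $G$ has no false twins, $v$ must distinguish every false-twin pair of $G-v$; the only possible difference between the $G$-neighbourhoods of two such twins is the presence of $v$, so each false-twin class of $G - v$ contains at most $2$ vertices. Since $(G - v)/{\sim}$ is a subgraph of $P$ and hence has maximum degree at most $2$, every vertex of $G - v$ has degree at most $2 + 2 = 4$; a short additional argument (examining a hypothetical degree-$5$ vertex, which would have to be universal in $G$) rules out $\Delta(G) = 5$.

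If $\Delta(G) \le 2$, then $G$ is a disjoint union of paths and cycles. Any cycle component $C$ is point-determining and satisfies $C/{\sim} = C$, which is not a subgraph of the acyclic $P$; hence $C$ is itself a $P$-obstruction, so by minimality $G = C$. Otherwise $G$ is a linear forest.

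The main technical task is the case $\Delta(G) \in \{3, 4\}$. For a maximum-degree vertex $w$ and a non-neighbour $u$ of $w$ (which exists whenever $|V(G)| > \deg(w) + 1$), the class $[w]$ of $(G-u)/{\sim}$ is internal to a subpath of the quotient, and the two classes adjacent to $[w]$ have sizes summing to $\deg(w)$. Combined with the class-size-$\le 2$ bound, this restricts the local structure of $G - u$ around $w$ to a short list of configurations; the conditions that $u$ distinguishes each false-twin pair of $G - u$ without creating new false twins in $G$ narrow the possibilities further. Varying $u$ over several vertices and imposing compatibility between the resulting blow-up structures forces $|V(G)| = 6$ and pins down the adjacencies. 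The main obstacle is the bookkeeping in this case analysis, but the end goal is to verify that the only graphs compatible with all constraints are precisely $A$, $B$, and $E$.
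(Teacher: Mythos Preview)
Your approach is genuinely different from the paper's, and while the strategy is sound in outline, the central case is left as a promise rather than a proof.

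The paper's argument never bounds $\Delta(G)$. Instead it uses minimality together with \cref{lem:Cn-On}: since $C_3$ and each $C_m$ with $5\le m\le n+1$ already lie in $\obs(P_n)$, a minimal obstruction $G$ cannot properly contain any of them, and since $P_{n+1}$ is not full $P_n$-colourable it cannot contain an induced $P_{n+1}$ either, hence no induced $C_m$ with $m\ge n+2$. Thus every induced cycle of $G$ has length~$4$. If $G$ is not a forest, pick an induced $C_4=v_1v_2v_3v_4$; point-determinacy forces $N(v_1)\neq N(v_3)$ and $N(v_2)\neq N(v_4)$, giving two extra vertices $v_0,v_5$ whose adjacencies (triangle-freeness!) are completely forced, and the resulting $6$-vertex graph is already $A$ or $B$, hence equals $G$ by minimality. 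If $G$ is a forest but not linear, an induced claw is extended the same way to $E$. No case analysis on degrees is needed; the known cycle obstructions do all the work.

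Your route---bounding false-twin class sizes in $G-v$ by $2$, hence $\Delta(G-v)\le 4$, hence $\Delta(G)\le 5$---is correct and elegant, and the $\Delta\le 2$ case is fine. But the heart of your argument is the $\Delta\in\{3,4\}$ analysis, and you do not actually carry it out; you describe the mechanism (look at $(G-u)/{\sim}$ for various non-neighbours $u$, track the size-$2$ classes, impose compatibility) and then assert that the bookkeeping terminates at $A$, $B$, $E$. That is precisely the step a reader cannot verify from what you wrote. In particular, you must rule out $\Delta(G)=4$ entirely (since $A$, $B$, $E$ all have maximum degree~$3$), and it is not obvious this falls out without eventually re-deriving triangle-freeness---at which point you are essentially reproducing the paper's shortcut. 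Even the $\Delta=5$ case, which you dismiss as ``short'', needs the observation that a universal vertex together with any edge of $G-w$ creates a $C_3$, and that $G-w$ cannot be edgeless (else $G-w$ has five mutual false twins); this is not hard, but it already hints that the smaller-obstruction leverage is what makes the argument close.

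In short: your framework can likely be completed, but as written the $\Delta\in\{3,4\}$ step is a genuine gap, and the paper's use of \cref{lem:Cn-On} bypasses exactly that case analysis.
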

\begin{proof}
Let $n$ be a positive integer such that  $G\in \obs(P_n)$. We
show that if $G$ is neither a cycle nor a linear forest, then is one of the graphs 
$A$, $B$ or $E$. First, suppose that $G$ is not a cycle or a forest. By minimality of $G$, and
by the fourth statement of \cref{lem:Cn-On}, we know that $G$ does not contain
a triangle nor a cycle of length $m$ with $5\le m \le n+1$. It is not hard to see that
the path on $n+1$ vertices is not full $P_n$-colourable, thus $G$ does not contain
an induced path on $n+1$ vertices, and so, it does not contain a cycle of length
$m\ge n+2$. Putting both of these observations together we conclude that
$G$ contains no triangle nor an induced cycle of length $m\ge 5$. Since
$G$ is not a forest, there is an induced $4$ cycle $C$,  $C = v_1,v_2,v_3,v_4$,
in $G$. 
By the choice of $G$ and by second part of \cref{lem:point-det}, it is the case that
$G$ is a point-determining graph. In particular, $N(v_1) \neq N(v_3)$ and
$N(v_2)\neq N(v_4)$ so, without loss of generality
we assume that $v_1$ has a neighbour $v_0\not\in\{v_2,v_4\}$ and $v_4$ has a
neighbour $v_5\not\in\{v_1,v_3\}$. Since $G$ has no triangles,  the unique neighbour
of $v_0$ (resp.\ $v_5$) in $C$ is $v_1$ (resp.\ $v_4$). 
Let $H$ be the subgraph of $G$ induced by $\{v_0,\dots, v_5\}$. This graph
is isomorphic to either $A$ or $B$. Clearly, neither of $A$ nor $B$ admit a full
$P_n$-colouring, and thus, by minimality of $G$ we conclude that
$G = H$.

In the paragraph above, we showed that if $G\in \obs(G)$ and $G$ is not a forest,
then either $G$ is a cycle  or $G \in\{A,B\}$. To conclude the proof, suppose
that $G$ is a forest but not a linear forest. In this case, $G$ contains an induced 
claw $C$. With a similar procedure to the paragraph above, we extend $C$
to an induced subgraph $H$ of $G$ such that $H\cong E$. 
Since $E$ does not admit a full $P_n$-colouring, we conclude that $G = H \cong E$, 
and the claim follows.
\end{proof}

In order to complete the characterization of $\obs(P_n)$,  we study minimal 
$P_n$-obstructions that are linear forests. To do so, it will be convenient to 
introduce the following notation. First, notice that each linear forest $L$ admits an
injective full-homomorphism to any large enough path. So, we denote by $\mu(L)$
the minimum integer $n$ such that there is an injective full-homomorphism from $L$
to $P_n$.  Also, since linear forests are disjoint unions of paths, we will denote a
linear forests $L$ as $\sum_{k=1}^mP_{n_k}$, where the $k$-th component of $L$
is the path on $n_k$ vertices. Finally, we denote by $c(G)$ the number of connected
component of a graph $G$.

\begin{lemma}\label{lem:LF-injective}
For a linear forest $L=\sum_{k=1}^mP_{n_k}$ the following equalities hold
\[
\mu(L) =  |V(L)| + c(L) -1 = (m-1) + \sum_{k=1}^mn_k.
\]
\end{lemma}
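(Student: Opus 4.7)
The plan is to observe that the second equality is a matter of substitution: by definition $|V(L)| = \sum_{k=1}^{m}n_k$ and $c(L)=m$, so $|V(L)|+c(L)-1 = (m-1)+\sum_{k=1}^{m}n_k$. The content is in the first equality $\mu(L)=|V(L)|+c(L)-1$, which I would prove by matching upper and lower bounds.

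For the upper bound $\mu(L)\le |V(L)|+c(L)-1$, I would exhibit an explicit injective full-homomorphism. Let $n = |V(L)|+c(L)-1$ and write $P_n = u_1u_2\cdots u_n$. I would send the first component $P_{n_1}$ onto the prefix $u_1,\dots,u_{n_1}$, skip the vertex $u_{n_1+1}$, send the second component $P_{n_2}$ onto $u_{n_1+2},\dots,u_{n_1+n_2+1}$, skip one more vertex, and so on, packing the $m$ components into consecutive blocks separated by a single unused vertex. The map is clearly injective; within each block adjacencies are preserved, and the one-vertex gap between consecutive blocks guarantees that vertices from different components of $L$ are mapped to non-adjacent vertices of $P_n$. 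Hence it is an injective full-homomorphism, and $\mu(L)\le n$.

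For the lower bound $\mu(L)\ge |V(L)|+c(L)-1$, I would argue that any injective full-homomorphism $\varphi\colon L\to P_n$ realizes $L$ as an induced subgraph of $P_n$ (by definition of full-homomorphism together with injectivity). Since each component of $L$ is connected, its image is a connected induced subgraph of $P_n$, which must be a sub-path on a set of consecutive vertices $u_{a_k},u_{a_k+1},\dots,u_{b_k}$ with $b_k-a_k+1 = n_k$. Re-index the components so that $a_1<a_2<\cdots<a_m$. For each $k<m$, at least one vertex of $P_n$ must separate the $k$-th and $(k{+}1)$-th blocks: otherwise $u_{b_k}$ and $u_{b_k+1}=u_{a_{k+1}}$ would be the endpoints of an edge in $P_n$ between two different components of $L$, contradicting that $\varphi$ preserves non-adjacencies. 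Therefore $a_{k+1}\ge b_k+2$, and a short telescoping gives $b_m \ge a_1 + \sum_{k=1}^{m}n_k + (m-1)-1 \ge |V(L)| + c(L) - 1$, whence $n\ge b_m \ge |V(L)|+c(L)-1$.

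I do not expect a serious obstacle here; the only point requiring care is the correct bookkeeping between blocks and gaps in the lower bound, and the explicit observation (essentially characterizing induced subgraphs of a path) that each component must occupy a consecutive block with at least one skipped vertex before the next block begins.
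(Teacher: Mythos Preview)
Your proof is correct and follows essentially the same approach as the paper: both establish the first equality by matching an explicit packing construction (components placed in consecutive blocks separated by one skipped vertex) against a lower bound coming from the fact that images of distinct components must be at distance at least~$2$ in the target path. You simply spell out the telescoping and the block structure in more detail than the paper does.
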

\begin{proof}
One can soon notice that the rightmost equality holds. Now, notice
that if $\varphi\colon L\to P_n$ is an injective full-homomorphism,
then the image of each components of $L$ must be at distance at least
$2$ in $P_n$, thus $n \ge  |V(L)| + c(L) -1$ and so, $\mu(L) \ge  |V(L)| + c(L) -1$. 
It is not hard to see that there is an injective full-homomorphism 
from $L$ to the path on $|V(L)| + c(L) -1$ vertices and hence, $\mu(L)  = |V(L)| + c(L) -1$.
\end{proof}

Consider a linear forest  $L=\sum_{k=1}^mP_{n_k}$. In order to simplify our writing, 
we define $m_i$ to be the number of  components of length $i$ in $L$.
In other words, $m_i$ is the cardinality of the set $\{k\in\{1,\dots,m\}:n_k=i\}$.
In particular, $m_i=0$ for all $i > |V(L)|$. 

Notice that if a linear forest $L$ contains two isolated vertices, then $L$ is not a point-determining
graph. Similarly, if $L$ contains a component isomorphic to $P_3$, then it is also the case that $L$
is not a point-determining graph.

\begin{lemma}\label{lem:LF-components}
Let $L=\sum_{k=1}^mP_{n_k}$ be a linear forest and $n$ a positive integer. 
If $L\in \obs(P_n)$, then the following statements hold:
\begin{enumerate}
	\item $m_1\le 1$.
       \item $n_k\in\{1,2,4,6\}$ for all $k\in\{1,\dots,m\}$.
      \item If $n_k\in\{4,6\}$ for some $k$, then $m_1=1$.
\end{enumerate}
\end{lemma}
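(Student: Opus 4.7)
The overall plan is to use \cref{lem:point-det}.2 — which guarantees that $L$ is point-determining — together with the formula $\mu(L') = |V(L')| + c(L') - 1$ of \cref{lem:LF-injective}. Indeed, by \cref{lem:point-det}.1, every full-homomorphism out of a point-determining linear forest $L'$ is injective, so $L'$ admits a full $P_n$-colouring if and only if $\mu(L') \le n$; in particular $\mu(L) > n$, while for each vertex $v$ the subgraph $L - v$ must admit a full $P_n$-colouring. My approach is therefore to derive contradictions, in each case, by exhibiting a single vertex $v$ such that $L - v$ is still point-determining and satisfies $\mu(L - v) \ge \mu(L)$.

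Statement~1 and the exclusion of $n_k = 3$ in statement~2 are immediate from the remarks preceding the lemma: two isolated vertices of $L$ are false twins, and the two leaves of a $P_3$-component of $L$ are false twins, either of which would contradict \cref{lem:point-det}.2.

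For the remaining values $n_k = 5$ and $n_k \ge 7$ of statement~2, I would remove the third vertex of some $P_{n_k}$-component. This replaces $P_{n_k}$ by $P_2 + P_{n_k - 3}$, and since $n_k - 3 \in \{2\} \cup \{4, 5, 6, \dots\}$, no new isolated vertex and no new $P_3$-component arises; hence $L - v$ remains point-determining. Because removing an internal vertex of a path-component decreases $|V(L)|$ by $1$ and increases $c(L)$ by $1$, \cref{lem:LF-injective} gives $\mu(L - v) = \mu(L) > n$, contradicting the minimality of $L$.

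For statement~3 I would argue contrapositively: suppose $n_k \in \{4, 6\}$ and $m_1 = 0$, and remove the second vertex of the $P_{n_k}$-component. This replaces $P_{n_k}$ by $P_1 + P_{n_k - 2}$ with $n_k - 2 \in \{2, 4\}$; the hypothesis $m_1(L) = 0$ makes the new $P_1$ the unique isolated vertex of $L - v$, and $n_k - 2 \ne 3$ means no new $P_3$-component appears, so $L - v$ is again point-determining with $\mu(L - v) = \mu(L) > n$, contradicting minimality. This forces $m_1 \ge 1$, hence $m_1 = 1$ by statement~1. The main recurring obstacle is verifying that each chosen $v$ really does leave $L - v$ point-determining; this always reduces to the concrete check that the removal creates neither a second isolated vertex nor a $P_3$-component, and the choices of $v$ are calibrated specifically to avoid both.
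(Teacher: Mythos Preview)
Your proof is correct and follows essentially the same approach as the paper: it uses the point-determining property from \cref{lem:point-det} together with the identity $\mu(L')=|V(L')|+c(L')-1$ from \cref{lem:LF-injective}, and in each case deletes the same internal vertex ($v_3$ for statement~2, $v_2$ for statement~3) to obtain a point-determining subgraph with unchanged $\mu$-value. The only cosmetic difference is that for statement~3 you argue contrapositively (assuming $m_1=0$ and deriving a contradiction), whereas the paper phrases it directly (showing $L-v_2$ must fail to be point-determining, hence contain a second isolated vertex); the logical content is the same.
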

\begin{proof}
By the second part of \cref{lem:point-det}, $L$ is a point-determining graph, so by the
arguments in the paragraph above, we see that $m_1 \le 1$ and there is no $k\in\{1,\dots, m\}$
such that $n_k = 3$. In particular, the first item holds, and to see that the second one
is also true, we show that every component of $L$ has at most
$6$ vertices but not $5$. Anticipating a contradiction, suppose that 
there is a path $P_{n_k} = v_1,v_2,\dots, v_{n_k}$ with $n_k = 5$ or
$n_k\ge 7$, for some $k\in\{1,\dots, m\}$. In such case, $L-v_3$ is a point-determining 
graph and $c(L-v_3)=c(L)+1$ so, by applying \cref{lem:LF-injective} to $L - v_3$ and to
$L$,  we see that $\mu(L-v_3) = \mu(L)$. By the choice of $L$, there is a full
$P_n$-colouring of $L-v_3$ which, by the first part of \cref{lem:point-det}, must be injective.
Thus, by definition of $\mu$, it follows that $n \ge \mu(L-v_3) = \mu(L)$, contradicting
the fact that $L$ is not full $P_n$-colourable. Therefore, if $L\in \obs(P_n)$, then
$n_k\in\{1,2,4,6\}$ for every $k\in\{1,\dots, m\}$.


To prove the third statement, suppose  that $P_{n_k}=v_1,\dots,v_{n_k}$ with 
$n_k\in\{4,6\}$ for some  $k\in\{1, 2, \dots, m\}$. In this case, $c(L-v_2) = c(L)+1$.
So, if $L-v_2$ is a point-determining graph, by using a similar arguments
as in the first paragraph of this proof, we conclude that
$L$ admits a full $P_n$-colouring, contradicting the fact that
$L\in\obs(P_n)$. Hence, $L-v_2$ is not a point-determining linear forest. Since
every component of $L-v_2$ is either a component
of $L$, or $v_1$, or the path $v_3,\dots, v_{n_k}$, it must be the case that
there is an isolated vertex in $L-v_2$ other than $v_1$. Hence, $L$ has at least
one isolated vertex so $m_1\ge 1$, and by the first statement of this lemma, we conclude
 that $m_1 = 1$.

\end{proof}

%
%
%
%
%
%
%
It turns out the necessary conditions stated in \cref{lem:LF-components} are almost
sufficient conditions for a linear forest $L$ to be a minimal $P_n$-obstruction.

\begin{proposition}\label{prop:LF-suf-nec}
Let $L=\sum_{k=1}^mP_{n_k}$ be a linear forest and $n$ a positive integer. 
In this case, $L\in\obs(P_n)$ if and only if  one of the following statements holds:
\begin{enumerate}
     \item $\mu(L) = n +1$ and $n_k=2$ for all $k\in\{1,\dots,m\}$.
     \item  $\mu(L) =n + 1$ and $n_k\in\{1,2,4,6\}$ with $m_1 = 1$.
     \item $\mu(L) = n + 2$ and $n_k\in\{1,2,4\}$ with $m_1 = 1$.
\end{enumerate}
\end{proposition}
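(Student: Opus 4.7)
The plan is to build on \cref{lem:LF-components} by tracking, for every vertex $v\in V(L)$, the exact drop in $\mu$ when one passes from $L$ to the point-determining quotient of $L-v$. The first step is to record the fact that, for any graph $G$, a full $P_n$-colouring of $G$ exists if and only if the graph $G'$ obtained from $G$ by identifying each false-twin class to a single vertex admits a full $P_n$-colouring; when $G'$ is a linear forest this translates, via \cref{lem:point-det} and \cref{lem:LF-injective}, to $\mu(G')\le n$. Since any linear forest $L$ satisfying the conclusions of \cref{lem:LF-components} is already point-determining, I get two equivalences: $L$ is not full $P_n$-colourable precisely when $\mu(L)\ge n+1$, and $L-v$ is full $P_n$-colourable precisely when $\mu((L-v)')\le n$.

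The heart of the argument is a case analysis of the drop $\delta(v):=\mu(L)-\mu((L-v)')$, using the observation that the only false-twin pairs that ever appear in a linear forest are pairs of isolated vertices and the two endpoints of a $P_3$-component. Direct computation of $\mu((L-v)')$ via \cref{lem:LF-injective} yields the following values of $\delta(v)$. One has $\delta(v)=1$ in two situations: $v$ is an endpoint of a $P_2$-component and $m_1=0$, or $v$ is an endpoint of a $P_6$-component (in which case $L-v$ remains point-determining since the resulting $P_5$ is). One has $\delta(v)=3$ in the single situation that $v$ is an endpoint of a $P_2$-component with $m_1=1$, where the newly produced isolated vertex fuses with the preexisting one. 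In every remaining case, $\delta(v)=2$: namely $v$ is the isolated vertex of $L$, any vertex of a $P_4$-component, or the second or third vertex of a $P_6$-component.

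Setting $\delta_{\min}:=\min_{v\in V(L)}\delta(v)$, the condition $L\in\obs(P_n)$ becomes $n+1\le \mu(L)\le n+\delta_{\min}$. If $m_1=0$, then by \cref{lem:LF-components} $L$ is a matching, $\delta_{\min}=1$, and $\mu(L)=n+1$, yielding condition 1. If $m_1=1$ and $L$ has at least one $P_6$-component, again $\delta_{\min}=1$ and $\mu(L)=n+1$, which is a subcase of condition 2. In all remaining cases $\delta_{\min}=2$, so $\mu(L)\in\{n+1,n+2\}$, giving the other subcase of condition 2 and condition 3 respectively; note that $\mu(L)=n+2$ automatically forces $m_6=0$, since a $P_6$-component would drop $\delta_{\min}$ to $1$ and contradict $\mu(L)\le n+\delta_{\min}$.

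The main obstacle I foresee lies in the bookkeeping of the case analysis. The delicate subcases are those where new false twins appear in $L-v$ and must be quotiented out before \cref{lem:LF-injective} is applied: removing a second-from-the-end vertex of a $P_4$- or a $P_6$-component produces an extra isolated vertex that fuses with the preexisting one, and removing an interior vertex of a $P_6$-component produces a $P_3$-component whose two endpoints fuse. Tracking these fusions carefully is what guarantees the value $\delta(v)=2$ in all these subcases, and this is precisely what makes condition 3 possible.
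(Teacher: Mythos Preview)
Your approach is essentially the same as the paper's: both track how $\mu$ drops when a vertex is deleted and the result is quotiented to a point-determining graph. You package this uniformly via the function $\delta(v)$ and the quantity $\delta_{\min}$, whereas the paper runs the same bookkeeping as an ad~hoc case split on $\mu(L)$ and the component types; the underlying computations and the logical structure are identical.

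One computational slip to fix: removing the third (or fourth) vertex of a $P_6$-component produces $K_2+P_3$, and after fusing the $P_3$-endpoints you get $K_2+K_2$. This loses two vertices but \emph{gains} a component, so $\mu$ drops by only $1$, i.e.\ $\delta(v)=1$ in that subcase, not $2$. Fortunately this is harmless for your argument: when a $P_6$-component is present you already obtain $\delta_{\min}=1$ from an endpoint, and for sufficiency in condition~2 you only need $\delta(v)\ge 1$ everywhere, which still holds. Just correct the tabulation.
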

\begin{proof}
We prove the statement by case distinction depending on the components of $L$, 
and we begin by considering the case when $L = m_2K_2$. One can easily observe
that for every vertex $v$ of $L$, the equality $\mu(m_2K_2 -v) = \mu(m_2K_2) - 1$ holds. 
Also, $L$ and $L-v$  are  point-determining graphs so, $L$ and $L-v$ admit
a full-homomorphism to $P_n$ if and only if they admit an injective full-homomorphism
to $P_n$. Therefore, it follows from the definition of the parameter $\mu$
that $L\in \obs(P_n)$ if and only if $\mu(L) = n+1$. 

Now, suppose that $\mu(L) = n+1$ but $L$ is not a disjoint union of edges. 
By the second part of~\cref{lem:LF-components}, it follows that $n_k\in\{1,2,4,6\}$ for all
$k\in\{1,\dots,m\}$, and by the choice of $L$ and the third part of the same lemma
$m_1 = 1$. Now, we observe that in this case $L$ is a minimal $P_n$-obstruction.
Indeed, if $v$ is an end vertex of any component of $L$, then $\mu(L-v) = \mu(L)-1 = n$, 
so $L-v$ admits a full-homomorphism to $P_n$. Otherwise, if $v$ is a middle vertex of
a $P_4$ or a $P_6$, then $L-v$ is not point-determining: either $L-v$ has two isolated
vertices, or $L-v$ has a component isomorphic to $P_3$. Thus, by identifying the
isolated vertices, or the end vertices of $P_3$, we obtain a full-homomorphism
of $L-v$ to $P_n$. 

If neither of the previous cases holds, then $L$ is not a disjoint union of edges,
and $\mu(L) \le n$ or $\mu(L) \ge n+2$.
In the former case, there is an injective full-homomorphism from $L$ to  $P_n$
so $L$ is not a minimal $P_n$-obstruction. So, if $L\in \obs(P_n)$, then $\mu(L) \ge n +2$.
 Anticipating a contradiction suppose that $L\in \obs(P_n)$ and  that $\mu(L) \ge n+3$.
Again, it must be the case that $L$ has exactly one isolated vertex $v$. One can soon
notice that $\mu(L-v) = \mu(L) -2 \ge n+1$, and that $L-v$ is a point-determining graph.
Thus, by~\cref{lem:point-det}, we conclude that $L-v$ does not admit a full-homomorphism
to $P_n$, contradicting the choice of $L$. Thus if $L\in \obs(P_n)$ and $\mu(L)\neq n+1$, then
$\mu(L) = n +2$. One can easily notice that if $L$ contains a
component isomorphic to $P_6$, and $v$ is an end vertex of this component, then $L-v$ is a
point-determining graph, and $\mu(L-v) = \mu(L) - 1 = n-1$. So, with similar arguments as
before, we conclude that if $L\in \obs(P_n)$ and $\mu(L) = n+2$, then $n_k\in\{1,2,4\}$ 
for all $k\in\{1,\dots, m\}$, and $m_1 = 1$. 
We proceed to observe that if $\mu(L) = n+2$, and $n_k\in\{1,2,4\}$  for all $k\in\{1,\dots, m\}$
with  $m_1 = 1$, then $L\in \obs(P_n)$. Similar as before, if $v$ is the isolated vertex of $L$, 
then $\mu(L-v) = \mu(L) -2 = n$, so $L$ is full $P_n$-colourable. Otherwise, $L-v$
is not point determining: either $L-v$ has two isolated vertices, or a component isomorphic to $P_3$.
Again, by identifying the isolated vertices, or the end vertices of $P_3$, we obtain a full-homomorphism
from $L-v$ to $P_n$. 

The claim now follows because on the one hand,  in the first (resp.\ second and third) paragraph we observed
that the first (resp.\ second and third) statement is a sufficient condition for $L$ to be a minimal $P_n$-obstruction.
On the other one,  every linear forest $L$ satisfies either the first assumption of the first paragraph,
the first assumption of the second paragraph, or the first assumption of the third paragraph. Since in each
of these paragraphs we showed that if $L$ satisfies such assumption and $L\in \obs(P_n)$, then
$L$ must satisfy one of the three items of this proposition; all together proving that if $L\in \obs(P_n)$,
then $L$ satisfies one of~1--3.

\end{proof}

We are ready to propose a description of all minimal $P_n$-obstructions.
To do so, we introduce three sets $C(n)$, $LF(n)$ and $O(n)$,  which depend
on $n$ --- $C$ stands for cycles, $LF$ for linear forests, and $O$ for other. 
We begin with the simplest, 
\[
C(n):= \big\{C_m:~m = 3 \text{ or } 5\le m\le n+1\big\}.
\]
\noindent
Secondly, we define $O(n)$ as follows
\[
O(n):=
\begin{cases}
\varnothing \text{ if } n \le 4,\\ 
\{B\} \text{ if } n = 5,\\ 
\{A,B\} \text{ if } n = 6,\\ 
\{A,B, E\} \text{ if } n \ge 7.\\ 
\end{cases}
\]
\noindent
Finally, $LF(n)$ is the union $LF_1(n)\cup LF_2(n)\cup LF_3(n)$
where
\[
LF_1(n):=\big\{m_2K_2:~ 3m_2 = n+2\big\}
\]
\[
LF_2(n) :=  \big\{K_1 + m_2K_2 + m_4P_4:~ 3m_3 + 5m_4  = n+1\big\}, \text{ and }
\]
\[
LF_3(n) := \big\{K_1 + m_2K_2 + m_4P_4+ m_6P_6:~3m_2 + 5m_4 + 7m_6 = n\big\}.
\]
\noindent
We describe the set $\obs(P_n)$ of minimal $P_n$-obstructions in terms of
the previously defined sets. 

\begin{theorem}\label{thm:paths}
For every positive integer $n$ the set $\obs(P_n)$ of minimal $P_n$-obstructions is the
union $ C(n)\cup LF(n) \cup O(n)$.
\end{theorem}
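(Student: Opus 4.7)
The plan is to prove this by combining the three preceding results: \cref{lem:Cn-On}, \cref{lem:3possibilties}, and \cref{prop:LF-suf-nec}. The argument splits into the two set inclusions, and each reduces to a routine case check.

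For the containment $\obs(P_n) \subseteq C(n) \cup LF(n) \cup O(n)$, I would take $G \in \obs(P_n)$ and apply \cref{lem:3possibilties} to split into three cases. If $G$ is a cycle, then \cref{lem:Cn-On}(4) tells us $G = C_m$ with $m = 3$ or $5 \le m \le n+1$, which is exactly the definition of $C(n)$. If $G \in \{A,B,E\}$, then \cref{lem:Cn-On}(1)--(3) force $n$ to lie in the appropriate range, matching the definition of $O(n)$. If $G$ is a linear forest, apply \cref{prop:LF-suf-nec}: the three possibilities stated there must be shown to correspond to $LF_1(n)$, $LF_3(n)$ (the one with $P_6$ components, which covers condition 2), and $LF_2(n)$ (which covers condition 3) respectively. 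This correspondence amounts to substituting the component counts into the formula from \cref{lem:LF-injective}, namely $\mu(L) = (m-1) + \sum_k n_k$. For example, for $L = K_1 + m_2 K_2 + m_4 P_4 + m_6 P_6$ with $m_1 = 1$, one computes
\[
\mu(L) = 1 + 3m_2 + 5m_4 + 7m_6,
\]
so $\mu(L) = n+1$ exactly when $3m_2 + 5m_4 + 7m_6 = n$, matching $LF_3(n)$.

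For the reverse containment $C(n) \cup LF(n) \cup O(n) \subseteq \obs(P_n)$, I would again argue by cases on which of the three sets contains $G$. Membership in $\obs(P_n)$ for $G \in C(n)$ follows from \cref{lem:Cn-On}(4), and membership for $G \in O(n)$ follows from \cref{lem:Cn-On}(1)--(3). For $G \in LF(n)$, the same $\mu$-computations as in the previous paragraph verify that $G$ satisfies one of the three conditions of \cref{prop:LF-suf-nec}, which yields $G \in \obs(P_n)$ directly: elements of $LF_1(n)$ satisfy condition~1, elements of $LF_3(n)$ satisfy condition~2, and elements of $LF_2(n)$ satisfy condition~3.

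Since the heavy lifting has already been done in the preparatory lemmas and in \cref{prop:LF-suf-nec}, the only real content of this theorem is the bookkeeping that turns ``$\mu(L) = n+1$ and $n_k \in \{1,2,4,6\}$'' (etc.) into the integer-equation descriptions of $LF_1$, $LF_2$, $LF_3$. I expect the main obstacle to be nothing more than writing down this bookkeeping cleanly; there is no substantive obstacle, and the proof should be short.
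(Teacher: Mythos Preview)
Your proposal is correct and follows essentially the same route as the paper: invoke \cref{lem:Cn-On} for $C(n)\cup O(n)$, \cref{lem:3possibilties} to reduce the remaining obstructions to linear forests, and then \cref{prop:LF-suf-nec} together with the formula $\mu(L)=(m-1)+\sum_k n_k$ from \cref{lem:LF-injective} to identify the linear-forest obstructions with $LF(n)$. The paper's proof is simply a three-sentence summary of exactly this argument, whereas you spell out the $\mu$-bookkeeping explicitly (and correctly match conditions~1,~2,~3 of \cref{prop:LF-suf-nec} to $LF_1(n)$, $LF_3(n)$, $LF_2(n)$ respectively).
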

\begin{proof}
It follows from Lemma~\ref{lem:Cn-On}  that $C(n)\cup O(n)\subseteq \obs(P_n)$, and 
from Lemma~\ref{lem:3possibilties} that every $L\in \obs(P_n)\setminus(C(n)\cup O(n))$
is a linear forest.  The fact that the set of linear forests in $\obs(P_n)$ equals $LF(n)$, 
follows from \cref{prop:LF-suf-nec}, and from the equality $\mu(L) = (m-1) + \sum_{k=1}^mn_k$
from \cref{lem:LF-injective}.
\end{proof}

To conclude this section, allow us to discuss an implication of \cref{thm:paths}.
Since all paths are linear forests, any graph that admits a full-homomorphism
to some path, admits a full-homomorphism to some linear forest. On the other
hand, each linear forest admits a full-homomorphism to a large enough path. 
Thus, a graph $G$ admits a full-homomorphism to a path if and only if it admits
a full-homomorphism $G$ to a linear forest.

A \textit{blow-up} of a graph $G$ is obtained by addition of false twins ---
intuitively, by ``blowing up'' some vertices of $G$ to an independent set. 
Clearly, a graph $G$ admits a full $H$-colouring if and only if $G$ is 
a blow-up of some induced subgraph of $H$. Since the class of linear forest
is closed under induced subgraphs, we use the observation in the paragraph
above to prove the following statement.

\begin{corollary}
A graph $G$ is a blow-up of a linear forest if and only if $G$ is
an $\{A,B,E\}$-free graph such that all induced cycles have length four.
\end{corollary}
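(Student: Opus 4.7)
The plan is to reduce the corollary to an application of \cref{thm:paths}, exploiting the equivalence noted just before its statement: $G$ is a blow-up of a linear forest if and only if $G$ admits a full-homomorphism to some linear forest, which (since every linear forest embeds injectively and fully into a sufficiently long path by \cref{lem:LF-injective}) is in turn equivalent to $G$ admitting a full $P_n$-colouring for some positive integer $n$.

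For the forward direction, I would argue that if such a colouring exists for some $n$, composing with the obvious embedding $P_n\hookrightarrow P_{n'}$ produces one for every $n'\ge n$. Hence, for arbitrarily large $n$, $G$ contains no induced copy of any member of $\obs(P_n)=C(n)\cup LF(n)\cup O(n)$. Taking $n\ge 7$ forbids $A$, $B$, and $E$; letting $n\to\infty$ forbids every $C_m$ with $m\in\{3\}\cup\{5,6,7,\dots\}$. This yields exactly the two stated necessary conditions.

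The converse is the substantive part. Given $G$ satisfying the two hypotheses, my plan is to produce a single integer $n$ for which no element of $\obs(P_n)$ appears as an induced subgraph of $G$; then \cref{thm:paths} delivers a full $P_n$-colouring and hence the desired blow-up structure. For every $n\ge 7$ the hypotheses already handle $O(n)$ and $C(n)$. The main obstacle is the linear-forest family $LF(n)$, whose members are described by Diophantine conditions that shift with $n$, so ruling them out \emph{uniformly} is not immediate.

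The decisive step will be a crude vertex-count bound. Combining \cref{lem:LF-injective} with the conditions defining $LF_1(n), LF_2(n), LF_3(n)$ (equivalently, the trichotomy in \cref{prop:LF-suf-nec}) shows that every $L\in LF(n)$ has $\mu(L)\in\{n+1,n+2\}$, so
\[
|V(L)|+c(L)-1=\mu(L)\ge n+1.
\]
Combined with the trivial inequality $c(L)\le |V(L)|$ this yields $|V(L)|\ge (n+2)/2$. Consequently, as soon as $n\ge\max\{7,\,2|V(G)|\}$, every member of $LF(n)$ has strictly more vertices than $G$ and cannot occur as an induced subgraph of $G$. For such $n$ the graph $G$ avoids the whole of $C(n)\cup LF(n)\cup O(n)$, and \cref{thm:paths} completes the proof.
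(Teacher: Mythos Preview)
Your argument is correct and follows the same overall strategy as the paper: both directions reduce to \cref{thm:paths}, and the nontrivial converse is handled by observing that the members of $LF(n)$ grow in size with $n$, so a sufficiently large $n$ makes $G$ automatically $LF(n)$-free. The only difference is one of precision: where the paper simply asserts that ``the number of vertices of the smallest graph in $LF(n)$ increases with respect to $n$'' and picks an unspecified $N$, you extract the explicit lower bound $|V(L)|\ge (n+2)/2$ from $\mu(L)\ge n+1$ and $c(L)\le |V(L)|$, yielding the concrete choice $n\ge\max\{7,\,2|V(G)|\}$.
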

\begin{proof}
If $G$ is a blow-up of some linear forest, then $G$ admits a full
$P_n$-colouring for some large enough $n$. Thus, by \cref{thm:paths},
$G$ is an $\{A,B,E\}$-free graph such that all induced cycles have length four.
On the other hand, notice that the number of vertices of the smallest graph in 
$LF(n)$ increases with respect to $n$. Thus, for every a graph $G$ 
there is a positive integer $N$ such that $G$ is an $LF(N)$-free graph.
Hence, if $G$ is
an $\{A,B,E\}$-free graph such that all induced cycles have length four,
then $G$ is an $(O(N)\cup C(N) \cup LF(N))$-free graph. 
Therefore, $G$ admits a full $P_N$-colouring, and so, $G$ is
a blow-up of a linear forest.
\end{proof}


\section{Cycle obstructions}
\label{sec:cycles}

The aim of this section is listing all minimal obstructions of cycles.
To do so, we first make some general observations
regarding minimal obstructions of regular graphs. \cref{prop:rem-v}
asserts that for each point-determining graph $G$, there is a vertex
$x\in V(G)$ such that $G-x$ is point-determining.  We begin by noticing that
this can be strengthen in the case of regular graphs.

\begin{proposition}\label{prop:nucleus-regular}
Let $H$ be a point-determining graph. If $H$ is a regular graph, then
for each $x\in V(H)$ the graph $H-x$ is point-determining.
\end{proposition}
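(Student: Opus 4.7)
The plan is a short contradiction argument based on degree counting. Suppose for contradiction that there is some $x \in V(H)$ such that $H - x$ is not point-determining. Then $H - x$ contains a pair of false twins, say $u$ and $v$, meaning $u$ and $v$ are non-adjacent in $H - x$ (hence in $H$) and $N_{H-x}(u) = N_{H-x}(v)$.

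Since $H$ is itself point-determining, $u$ and $v$ cannot be false twins in $H$, so $N_H(u) \neq N_H(v)$. But observe that $N_H(u) = N_{H-x}(u) \cup (N_H(u) \cap \{x\})$, and similarly for $v$. Combining with $N_{H-x}(u) = N_{H-x}(v)$, the only way the full neighborhoods in $H$ can differ is if exactly one of $u, v$ is adjacent to $x$ in $H$; say $x \in N_H(u)$ and $x \notin N_H(v)$.

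This immediately gives $|N_H(u)| = |N_{H-x}(u)| + 1$ while $|N_H(v)| = |N_{H-x}(v)| = |N_{H-x}(u)|$, so $u$ and $v$ have different degrees in $H$, contradicting the assumption that $H$ is regular. Therefore $H - x$ must be point-determining.

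There is no real obstacle here: the proof is essentially a two-line degree count, and the regularity hypothesis is used exactly once, at the final step, to rule out the only way neighborhoods in $H - x$ can collapse without already collapsing in $H$. The only thing to be slightly careful about is noting that false twins are by definition non-adjacent (so $u \neq v$ are truly the same in $H - x$ and can be compared cleanly), which is why the candidate discrepancy between $N_H(u)$ and $N_H(v)$ lies only in whether $x$ belongs to each.
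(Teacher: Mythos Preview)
Your proof is correct and follows essentially the same argument as the paper: both pick false twins in $H-x$, use point-determination of $H$ to force exactly one of them adjacent to $x$, and then obtain a degree discrepancy contradicting regularity. The only cosmetic difference is that the paper phrases it as a contrapositive while you phrase it as a contradiction.
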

\begin{proof}
Proceeding by contrapositive, suppose that there is a vertex $x\in V(H)$ such that
$H-x$ is not point-determining. Let $r,s\in V(H-x)$ be a pair of false twins, i.e.,
$rs\not\in E(H-x)$ and $N_{H-x}(r) = N_{H-x}(s)$. 
Since $H$ is a point-determining graph and $rs\not\in E(H)$, it must be the case
that $xr\in E(H)$ and $xs\notin E(H)$ (or vice versa). Hence, $d_H(s) = d_{H-x}(s) =
d_{H-x}(r) =  d_H(r) -1$. Thus, $H$ is not a regular graph.
\end{proof}

Consider a graph $H$ and a minimal $H$-obstruction $G$. By the second
part of \cref{lem:point-det}, $G$ is a point-determining graph so, by
\cref{prop:rem-v}, there is a vertex $v\in V(G)$ such that
$G-v$ is a point-determining graph, and $G-v$ admits a full $H$-colouring
by minimality of $G$.
Also, by  the first part
of  \cref{lem:point-det}, each full-homomorphism from $G-v$ to $H$ is injective,
and thus $|V(G-v)| \le |V(H)|$. Therefore, every graph $G\in \obs(H)$ has at
most $|V(H)|+1$ vertices. We denote by $\obs^\ast(H)$ the set of minimal
$H$-obstructions on $|V(H)|+1$ vertices. The following statement was
proved in \cite{federDM308}.

\begin{proposition}\cite{federDM308}\label{prop:obs*}
For any graph $H$, there are at most two non-isomorphic graphs in $\obs^*(H)$.
\end{proposition}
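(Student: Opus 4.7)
The plan is first to reduce any $G \in \obs^*(H)$ to the form $G = H + v$, where $v$ is one extra vertex adjacent to some $S \subseteq V(H)$. Since $G$ is point-determining by Lemma~\ref{lem:point-det}(2), Proposition~\ref{prop:rem-v} supplies a vertex $v \in V(G)$ with $G - v$ point-determining. The minimality of $G$ then yields a full $H$-colouring of $G - v$, which is injective by Lemma~\ref{lem:point-det}(1); counting vertices, this map is a bijection and therefore an isomorphism $G - v \cong H$. In particular $H$ is itself point-determining (otherwise $\obs^*(H) = \varnothing$). Identifying $V(G-v)$ with $V(H)$ we can view $G$ as $H$ with one extra vertex $v$ whose neighbourhood is a subset $S = N_G(v) \subseteq V(H)$; two such augmentations give isomorphic graphs precisely when the corresponding sets $S$ lie in the same $\mathrm{Aut}(H)$-orbit. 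The proposition therefore reduces to showing that at most two $\mathrm{Aut}(H)$-orbits of subsets $S$ make $H+v$ a minimal $H$-obstruction.

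Next I would translate the defining conditions into conditions on $S$. Point-determining of $G = H + v$ requires $N_H(u) \ne S$ for every $u \in V(H)$. Minimality requires that for each $u \in V(H)$ the graph $(H - u) + v$ (with $v$ adjacent to $S \setminus \{u\}$) admits a full $H$-colouring. Since $(H-u)+v$ has $|V(H)|$ vertices, its full $H$-colouring is either a bijection, giving an isomorphism $(H-u)+v \cong H$ that sends $v$ to some $w \in V(H)$ (and hence a graph isomorphism $H - u \cong H - w$ matching $S \setminus \{u\}$ to $N_H(w)$), or else not injective. A short argument in the non-injective case, using that $G$ itself is point-determining and hence that any two false twins of $(H-u)+v$ lying inside $V(H) \setminus \{u\}$ would contradict $H$ being point-determining unless $u$ distinguishes them, forces one of the twins to be $v$. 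This yields a vertex $x \in V(H) \setminus (S \cup \{u\})$ with $N_H(x) = S \triangle \{u\}$.

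The main step is then to show that these conditions, taken across all $u \in V(H)$, leave at most two $\mathrm{Aut}(H)$-orbits of admissible $S$. The key rigidity is that in the point-determining graph $H$, each instance of the near-neighbourhood relation $N_H(x) = S \triangle \{u\}$ determines the pair $(x,u)$ from $S$ uniquely up to the stabiliser of $S$ in $\mathrm{Aut}(H)$. Using this, my plan is to fix one admissible $S_0$ and argue that every other admissible $S$ either lies in the orbit of $S_0$ or in the orbit of a single canonical ``partner'' set, obtained from $S_0$ by applying the twin or substitution relation once. The main obstacle is precisely ruling out a third admissible orbit: I expect to argue by contradiction, supposing three admissible subsets $S_1, S_2, S_3$ in pairwise distinct $\mathrm{Aut}(H)$-orbits, and then combining the substitution and near-neighbourhood identities they each provide to exhibit a pair of false twins in $H$, contradicting the point-determining hypothesis on $H$.
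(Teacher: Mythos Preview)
The paper does not prove this proposition: it is stated with a citation to Feder and Hell \cite{federDM308} and no argument is supplied. So there is no proof in the paper to compare your attempt against.

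On its own merits, your first two paragraphs are correct and set things up properly. The reduction to $G = H + v$ with $S = N_G(v) \subseteq V(H)$ is the content of \cref{obs:iso} in the paper, and your dichotomy for the full $H$-colouring of $(H-u)+v$ --- either it is an isomorphism onto $H$, or $v$ acquires a false twin $x \in V(H)\setminus\{u\}$ with $N_H(x) = S \mathbin{\triangle} \{u\}$ --- is sound.

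The genuine gap is your third paragraph, which carries the entire weight of the proposition. You do not actually prove that at most two $\mathrm{Aut}(H)$-orbits of $S$ can be admissible; you only announce a plan (``fix $S_0$, show every other admissible $S$ lies in the orbit of $S_0$ or of a single canonical partner'') and a hoped-for contradiction (``three admissible sets in distinct orbits would force false twins in $H$''). Neither step is executed, and it is far from clear that the second one works: the near-neighbourhood and isomorphism conditions you have extracted hold for each $u$ separately and for each admissible $S$ separately, and you give no mechanism for combining conditions coming from \emph{different} $S$'s to produce a single pair of false twins in $H$. As written, the proposal is a reformulation of the problem in convenient coordinates rather than a proof; the combinatorial core --- why the admissible extensions collapse to at most two --- is missing.
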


By similar arguments as in the paragraph above, we observe
that if $G\in \obs^\ast(H)$, then there is a vertex $v\in V(G)$ such that
$G-v\cong H$. 

\begin{observation}\label{obs:iso}
Consider a pair of graphs $G$ and $H$. If $G\in \obs^\ast(H)$, then
there is a vertex $v\in V(G)$ such that $G-v\cong H$.
\end{observation}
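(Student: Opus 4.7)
The plan is to assemble the pieces already proved in the preliminaries and observe that a vertex-count argument forces the full-homomorphism produced by minimality to be an isomorphism.

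First I would apply the second part of \cref{lem:point-det} to conclude that $G$ is point-determining, and then invoke \cref{prop:rem-v} to find a vertex $v\in V(G)$ such that $G-v$ is still point-determining. By the minimality of $G\in\obs(H)$, the graph $G-v$ admits a full $H$-colouring $\varphi\colon G-v\to H$.

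Next, since $G-v$ is point-determining, the first part of \cref{lem:point-det} tells us that $\varphi$ is injective. Now comes the vertex count: because $G\in \obs^\ast(H)$, we have $|V(G)|=|V(H)|+1$, so $|V(G-v)|=|V(H)|$. Hence $\varphi$ is an injection from a set of size $|V(H)|$ into a set of the same size, and therefore a bijection on vertices.

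The final step is to observe that a bijective full-homomorphism is an isomorphism: by definition of full-homomorphism, $xy\in E(G-v)$ if and only if $\varphi(x)\varphi(y)\in E(H)$, so the bijection $\varphi$ preserves and reflects adjacency. Thus $G-v\cong H$, as required. There is no real obstacle here; the only point worth stating explicitly is the observation that injective full-homomorphisms between graphs of the same order are automatically isomorphisms, which is what elevates the arguments preceding \cref{obs:iso} from ``$G-v$ embeds into $H$'' to ``$G-v$ is isomorphic to $H$.''
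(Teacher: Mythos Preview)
Your proposal is correct and follows exactly the line the paper sketches in the paragraph preceding \cref{obs:iso}: use \cref{lem:point-det} and \cref{prop:rem-v} to find $v$ with $G-v$ point-determining, obtain an injective full $H$-colouring of $G-v$ by minimality, and then note that the vertex count $|V(G-v)|=|V(H)|$ forces this injection to be a bijection, hence an isomorphism. The paper treats the observation as immediate from those arguments and does not spell out a separate proof.
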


This observation about the structure of graphs in $\obs^\ast(H)$
can be strengthened when $H$ is a regular graph. Recall that a pair of
vertices $u$ and $v$ in a graph $G$ are true twins if $N[u] = N[v]$.

\begin{lemma}\label{lem:true-twins}
Let $H$ be a non-complete regular connected graph.
For every graph $G\in \obs^\ast(H)$ there is a pair of true twins
$u,v\in V(G)$ ($u\neq v$)  such that $G-v\cong H$ and $G-u \cong H$.
\end{lemma}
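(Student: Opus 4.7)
The plan is to argue in three moves: first show that $G$ must be connected; then apply \cref{prop:rem-v} to extract two distinct vertices $u,v$ whose removal preserves point-determinism; and finally use the $d$-regularity of $H$ to upgrade $u,v$ to true twins.

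For connectedness, \cref{obs:iso} gives a vertex $v\in V(G)$ with $G-v\cong H$. Since $H$ is connected, if $G$ were disconnected then the only possibility is $G=H+K_1$, with $v$ as the isolated vertex. To rule this out I would use minimality: letting $d$ denote the common degree of $H$, note that connectedness plus non-completeness forces $d\ge 2$ (the case $d=1$ would give $H=K_2$, which is complete). I would then show that for any $w\in V(H)$, the graph $G-w=(H-w)+K_1$ admits no full $H$-colouring. By \cref{prop:nucleus-regular}, $H-w$ is point-determining, so any would-be full-homomorphism $\psi\colon (H-w)+K_1\to H$ restricts injectively to $V(H-w)$ by \cref{lem:point-det}. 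Letting $u_0$ be the isolated vertex, either $\psi(u_0)\notin\psi(V(H-w))$, so the image covers $V(H)\setminus\{\psi(u_0)\}$ and hence the $d\ge 2$ neighbours of $\psi(u_0)$ in $H$ have nowhere to live; or $\psi(u_0)=\psi(y)$ for some $y\in V(H-w)$, which forces $y$ to be isolated in $H-w$ (any neighbour $z$ of $y$ would yield a contradiction between $yz\in E(H-w)$ and $u_0z\notin E$), hence $\deg_H(y)\le 1$, contradicting $d\ge 2$.

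With $G$ now connected and (by \cref{lem:point-det}) point-determining, \cref{prop:rem-v} yields distinct $u,v\in V(G)$ such that both $G-u$ and $G-v$ are point-determining. Minimality of $G$ provides full $H$-colourings of both; \cref{lem:point-det} makes them injective, and since $|V(G-u)|=|V(G-v)|=|V(H)|$ they are graph isomorphisms onto $H$. For the twin conclusion, the $d$-regularity of $H$ gives $\deg_{G-u}(x)=\deg_{G-v}(x)=d$ for every $x\in V(G)\setminus\{u,v\}$, and subtracting from $\deg_G(x)$ yields
\[
[xu\in E(G)] \;=\; [xv\in E(G)].
\]
Thus $u$ and $v$ share the same set of neighbours outside $\{u,v\}$. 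If $uv\notin E(G)$ they would be false twins, contradicting that $G$ is point-determining; hence $uv\in E(G)$, and combining this with the common external neighbourhood gives $N_G[u]=N_G[v]$, i.e., $u$ and $v$ are true twins.

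The main obstacle is the connectedness step: \cref{prop:rem-v} only provides two removable vertices when $G$ is connected, so the single troublesome candidate $G=H+K_1$ must be ruled out, and this is precisely where all three hypotheses on $H$ (connected, non-complete, regular) are simultaneously used, via \cref{prop:nucleus-regular} and the bound $d\ge 2$. Once this is secured, the remainder is a short degree count powered by the regularity of $H$.
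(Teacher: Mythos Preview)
Your proof is correct and follows the same three-step skeleton as the paper: show $G$ is connected, apply \cref{prop:rem-v} to extract two vertices whose removal leaves a point-determining graph (hence a copy of $H$), and finish with the degree count that forces true twins. The only substantive difference is in the connectedness step. The paper fixes the vertex $x$ with $G-x\cong H$ and argues \emph{positively}: picking any $v\neq x$, it uses the full $H$-colouring of $G-v$ guaranteed by minimality, together with \cref{prop:nucleus-regular} and the fact that $H-v$ has no isolated vertices (since $d\ge 2$), to conclude that $x$ has a neighbour. You instead argue \emph{negatively}: assuming $G=H+K_1$, you show $(H-w)+K_1$ admits no full $H$-colouring for $w\in V(H)$, contradicting minimality. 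Both routes use exactly the same ingredients (\cref{prop:nucleus-regular}, \cref{lem:point-det}, and $d\ge 2$); yours is arguably cleaner to read, while the paper's has the mild advantage of retaining the original vertex $x$ as one of the two twins rather than discarding it and producing a fresh pair $u,v$ from \cref{prop:rem-v}. Steps two and three are essentially identical in both proofs.
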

\begin{proof}
Since $H$ is non-complete, it is not isomorphic to $K_2$, and since it is connected,
it is not a matching. Thus, $H$ is a $k$-regular graph with
$k \ge 2$. Let $G\in \obs^\ast(H)$. By \cref{obs:iso}, there is a vertex $x\in V(G)$ such that $G-x\cong H$.
For this proof, it will be convenient to identify $H$ with the subgraph of $G$
induced by $V(G)-\{x\}$. We fix $x$ and use this identification
throughout the proof.  We proceed to show $x$ is not an isolated vertex.
Since $k \ge 2$, there
are no leaves in $H$. Consider a vertex $v\in V(G)-\{x\}$ and let
$\varphi\colon G- v \rightarrow H$ be a full-homomorphism.
Since $H$ is a regular graph, by \cref{prop:nucleus-regular} we know that
$H-v$ is point-determining  so, by the first part of \cref{lem:point-det}, 
the restriction of $\varphi$ to $H-v$ is an injective mapping. Let $L$ be the image
$\varphi[H-v]$ of $H-v$. In particular,  $|V(L) | = |V(H)| -1$. Since $H$ is connected,
either $\varphi(x)$ has a neighbour in $L$ or $\varphi(x)$ belongs to $L$.
Recall, that  $L\cong H-v$ and $H$ has no leaves so, $L$ has no isolated vertices.
Therefore, if $\varphi(x)$ belongs to $L$, then $\varphi(x)$ has a neighbour in $L$,
and since $\varphi$  is a full-homomorphism, $x$ cannot be an isolated vertex
in $G$.

In the paragraph above, we proved that $x$ is not an isolated vertex. Since
$G-x$  is connected (recall that
$G-x = H$), $G$ is a  connected graph. By \cref{prop:rem-v}, there is a vertex
$y\in V(G)-\{x\}$ such that $G-y$ is point-determining, and so, $G-y\cong H$.
We conclude the proof by showing that $x$ and $y$ are true twins in $G$.
Since $H$ is a $k$-regular graph, for each $v\in V(G-y)$ the equality $d_{G-y}(v)=k$ 
holds. Also,  since $H = G-x$, the equality $d_{H-y}(v)=k-1$ holds if and only if
$v\in N_H(y) = N_{G-x}(y)$. On  the other hand, $k-1 = d_{(G-y)-x}(v)-1$ if and only if $v\in N_{G-y}(x)$. Clearly, $(G-y)-x = H- y$, and so,
$v\in N_{G-y}(x)$ if and only if  $v\in N_{G-x}(y)$ for any $v\in V_{G-\{x,y\}}$. Thus, 
$N_G(x)-y=N_{G-y}(x) = N_{G-x}(y)=N_G(y)-x$ so in particular,
$N_G(x)-y = N_G(y)-x$. Since $G$ is point-determining,
$x$ and $y$ are not false twins in $G$ so, $xy\in E(G)$, and thus 
$x$ and $y$ are true twins. The claim follows. 
\end{proof}

\cref{prop:obs*} asserts that $|\obs^\ast(H)|\le 2$ for every graph $H$. Using
\cref{lem:true-twins}, we show that in the case of regular non-complete graphs
 $\obs^\ast(H) = \varnothing$. Recall that a \textit{universal} vertex in a graph $G$
 is a vertex $x\in V(G)$ adjacent to  every $y\in V(G)\setminus\{x\}$.

\begin{proposition}\label{prop:obs*-regular}
For a connected regular graph $H$, the
following equalities hold
\[
\obs^\ast(H) =
\begin{cases}
\{K_1+K_2, K_3\} \text{ if } H \cong K_2,\\
\{K_{n+1}\} \text{ if } H \cong K_n \text{ and }n\neq 2,\\
\varnothing \text{ otherwise.}\\
\end{cases}
\]
\end{proposition}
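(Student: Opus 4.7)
The plan splits on whether $H$ is complete. For $H \cong K_n$, I first note that full $K_n$-colourable graphs are exactly the complete multipartite graphs on at most $n$ parts, since $K_n$ has no non-edges, forcing non-adjacent vertices of $G$ to share an image. From this, $K_{n+1} \in \obs^\ast(K_n)$ for every $n$. When $n = 2$, the graph $K_1 + K_2$ also has $n + 1 = 3$ vertices, is not complete bipartite, and has all proper induced subgraphs complete bipartite, giving a second member. When $n \geq 3$, any graph on $n + 1$ vertices that is not complete multipartite contains an induced $K_1 + K_2$ (the classical characterisation of complete multipartite graphs), itself a smaller element of $\obs(K_n)$; hence no other graph on $n + 1$ vertices is minimal, leaving only $K_{n+1}$. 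The case $n = 1$ is immediate.

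For $H$ non-complete, connected, and $k$-regular (forcing $2 \leq k \leq n - 2$ with $n = |V(H)|$), I argue by contradiction. Assume $G \in \obs^\ast(H)$. Lemma~\ref{lem:true-twins} provides true twins $u, v \in V(G)$ with $G - u \cong G - v \cong H$. Set $A = N_G(u) \setminus \{v\}$ (equal to $N_G(v) \setminus \{u\}$) and $B = V(G) \setminus (\{u, v\} \cup A)$. Then $|A| = k$ and $|B| = n - 1 - k \geq 1$. A direct inspection shows that every vertex of $\{u, v\} \cup A$ has degree $k + 1$ in $G$, while every vertex of $B$ has degree $k$.

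Now pick any $w \in B$. By minimality there is a full-homomorphism $\varphi \colon G - w \to H$. Since $w \not\sim u$, still $\deg_{G - w}(u) = k + 1$, while $H$ is $k$-regular; so $\varphi$ cannot be injective, and must identify some pair $x \neq y$, which are then false twins in $G - w$. As $G$ is point-determining by Lemma~\ref{lem:point-det}, $x$ and $y$ are not false twins in $G$; thus $N_G(x) \triangle N_G(y) = \{w\}$, and up to symmetry $w \in N_G(x)$ and $w \notin N_G(y)$, giving $\deg_G(x) = \deg_G(y) + 1$. Combined with the degree profile above, this forces $x \in \{u, v\} \cup A$ and $y \in B$. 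The subcase $x \in \{u, v\}$ is ruled out because $w \in B$ is non-adjacent to both $u$ and $v$. If $x \in A$, then $u \in N_G(x)$ but $u \notin N_G(y)$ (since $y \in B$), so $u \in N_G(x) \setminus N_G(y) = \{w\}$, forcing $u = w$, contradicting $u \notin B$. The main obstacle, and the key insight, is locating $w$: a single $w \in B$ must simultaneously break the $k$-regularity of $G - w$ through $u$ and then constrain the forced false-twin pair so tightly that every placement of $(x, y)$ collapses; once the degree profile of $G$ is laid out, both roles are handled by the same choice of $w$.
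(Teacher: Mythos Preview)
Your proof is correct and follows the same overall strategy as the paper: both handle the complete case via the $\{K_1+K_2,K_{n+1}\}$-characterisation of full $K_n$-colourability, and for non-complete $H$ both invoke Lemma~\ref{lem:true-twins} to obtain true twins, pick a vertex $w$ non-adjacent to them, and use the degree excess at a twin in $G-w$ to force a pair of false twins there. The only difference is in how the final contradiction is reached. The paper notes that the false-twin pair $r,s$ lies in $N(y)$, passes to $H-w=(G-x)-w$, and invokes Proposition~\ref{prop:nucleus-regular} to contradict point-determinacy of $H-w$. You instead set up the degree profile of $G$ via the partition $\{u,v\}\cup A\cup B$ and argue directly that any false-twin pair in $G-w$ must have one member in $A$ and the other in $B$, which forces $u=w$ by comparing neighbourhoods. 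Your endgame is a little more self-contained (it never appeals to Proposition~\ref{prop:nucleus-regular}), while the paper's is shorter once that proposition is in hand.
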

\begin{proof}
Since the class of complete multipartite graphs is the class of $K_1+K_2$-free graphs,
the class of full $K_n$-colourable graphs is the class of $\{K_1+K_2,K_{n+1}\}$-free
graphs. 
Now, suppose  that $H$ is a regular non-complete connected graph and
let $G\in \obs^\ast(H)$. By \cref{lem:true-twins}, there is a pair 
of true twins $x$ and $y$ of $G$, such that $G-x\cong H \cong G-y$. Again, we identify
$H$ with the subgraph of $G$ induced by $V(G)-x$. Notice that if $x$ and
$y$ are universal vertices in $G$, then $y$ is a universal vertex in $H$ and so,
$H$ is a complete graph (because $H$ is a regular graph). So, by the choice of $H$, 
there is a vertex $z\in V(G)$ such that $zy\notin E(G)$, and since $x$ and $y$
are true twins, it is the case that $xz\notin E(G)$. By the choice of $G$, there is a
full-homomorphism
$\varphi\colon G-z\rightarrow H$. Let $k$ be the degree of every vertex in $H$
so, $d_G(x) = d_G(y)=k+1$. Since $zx,zy\not\in E(G)$, it is the case that
$d_{G-z}(x) = d_{G-z}(y)=k+1$. But $d_H(\varphi(y)) = k$ so,
there are two vertices $r,s\in N_{G-z}(y)$ such that 
$\varphi(r) = \varphi(s)$. Hence $N_{G-z}(r) = N_{G-z}(s)$ and $rs\not \in E(G-z)$.
Recall that $H = G -x$, so $N_{H-z}(r) = N_{H-z}(s)$ and $rs\not \in E(G-z)$, i.e.,
$r$ and $s$ are false twins in $H-z$. Thus, $H-z$ is not a point-determining graph
which contradicts the fact that $H$ is a regular graph and
\cref{prop:nucleus-regular}.
\end{proof}

The following statement shows that if a graph $G$ is a minimal $H$-obstruction
of size $|V(H)|+1$, then every minimal $G$-obstruction $F$ is either a minimal
$H$-obstruction or $|V(F)| = |V(G)| +1$. Conversely, every minimal $H$-obstruction
other than $G$ is a minimal $G$-obstruction.

\begin{theorem}\label{thm:obsH-obsG}
Consider a pair of graphs $H$ and $G$. If $G\in \obs^\ast(H)$, then
\[
\obs(G) = (\obs(H) \setminus\{G\})\cup \obs^\ast(G).
\]
\end{theorem}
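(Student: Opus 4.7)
The plan is to prove the claimed equality by establishing the two containments separately, handling $\supseteq$ first so that it may be invoked inside the argument for $\subseteq$.

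For $\supseteq$, the containment $\obs^\ast(G)\subseteq \obs(G)$ is immediate from the definition. Given $F\in \obs(H)\setminus\{G\}$, I would first note that by \cref{obs:iso} there is $v\in V(G)$ with $G-v\cong H$, so any full-homomorphism into $H$ composes with the inclusion $H\hookrightarrow G$ to yield a full-homomorphism into $G$; hence every proper induced subgraph of $F$, being full $H$-colourable by the minimality of $F\in\obs(H)$, is also full $G$-colourable. It remains to rule out a full $G$-colouring of $F$ itself. Since $F\in\obs(H)$, the graph $F$ is point-determining by \cref{lem:point-det}, so any full-homomorphism $\varphi\colon F\to G$ is injective, identifying $F$ with an induced subgraph $G'$ of $G$. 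If $G'=G$ then $F\cong G$, contradicting $F\neq G$; if $G'\subsetneq G$ then $G'$ is full $H$-colourable by the minimality of $G\in\obs(H)$, which forces $F$ to be full $H$-colourable, a contradiction.

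For $\subseteq$, let $F\in\obs(G)$. Since every minimal $G$-obstruction has at most $|V(G)|+1$ vertices, I would split into cases. If $|V(F)|=|V(G)|+1$, then $F\in\obs^\ast(G)$ by definition and we are done. Otherwise $|V(F)|\le |V(G)|$, and the goal is to show $F\in\obs(H)\setminus\{G\}$. Clearly $F\neq G$ (since the identity is a full $G$-colouring of $G$, whereas $F$ admits none), and $F$ is not full $H$-colourable (else composition with $H\hookrightarrow G$ would contradict $F\in\obs(G)$). The remaining task is to show that every proper induced subgraph of $F$ is full $H$-colourable. If not, pick a minimal induced subgraph $F'\subsetneq F$ that fails; then $F'\in\obs(H)$, and the strict inequality $|V(F')|<|V(F)|\le|V(G)|$ rules out $F'\cong G$. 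Hence $F'\in\obs(H)\setminus\{G\}$, and invoking the already-established $\supseteq$ gives $F'\in\obs(G)$; but $F'$ is a proper induced subgraph of $F\in\obs(G)$ and so must be full $G$-colourable, a contradiction.

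The main obstacle is this last step: forbidding the minimal ``bad'' subgraph $F'$ from being isomorphic to $G$ itself. This is precisely where the case split $|V(F)|\le |V(G)|$ is essential, and where one must rely on having already proved the reverse containment in order to close the argument.
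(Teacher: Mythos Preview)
Your proof is correct and rests on the same key observation as the paper's: because $H$ sits inside $G$ as $G-v$, and every proper induced subgraph of $G$ is full $H$-colourable, a graph with fewer than $|V(G)|$ vertices is full $G$-colourable if and only if it is full $H$-colourable.

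The only real difference is organizational. The paper stratifies by vertex count and, for graphs on at most $|V(G)|-1$ vertices, directly establishes the equivalence ``full $G$-colourable $\Leftrightarrow$ full $H$-colourable''; this immediately yields that $\obs(G)$ and $\obs(H)$ agree on such graphs, with the single size-$|V(G)|$ case handled separately. You instead prove the two containments and, in the $\subseteq$ direction, reach for a minimal counterexample $F'\in\obs(H)$ and bootstrap from the already-proved $\supseteq$. That detour is unnecessary: any proper induced subgraph $F_0\subsetneq F$ is full $G$-colourable by minimality of $F$, and since $|V(F_0)|<|V(F)|\le |V(G)|$ the image of that colouring is a proper induced subgraph of $G$, hence full $H$-colourable --- so $F_0$ is full $H$-colourable directly. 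Your argument is sound, just slightly longer than needed.
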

\begin{proof}
To simplify notation, let $S = (\obs(H) \setminus\{G\})\cup \obs^\ast(G)$. 
We need to prove that a graph $F$ belongs to $S$ if and only if 
it belongs to $\obs(G)$. Clearly, every graph in $S$ has at most $|V(G)| + 1$
vertices and so does every graph in $\obs(G)$. Moreover, by definition
of $\obs^\ast(G)$, a graph on $|V(G)| +1$ vertices belongs to $\obs(G)$
if and only if it belongs to $\obs^\ast(G)$. Thus, it suffices to prove the claim
for graphs on at most $|V(G)|$ vertices, and by the second part of
\cref{lem:point-det} it suffices to consider point-determining graphs.
We begin by showing that the claim holds for graphs on at most
$|V(G)| -1$ vertices (recall that $|V(G)| = |V(H)| +1$).
Since $G$ is a minimal $H$-obstruction, every proper induced subgraph of $G$
admits a full-homomorphism to $H$. Thus, any graph  that admits a
non-surjective full homomorphism to $G$, admits a full $H$-colouring. 
Hence, a graph on at most $|V(G)| -1$ vertices admits a  full $G$-colouring
if and only if it admits a full $H$-colouring. Therefore a graph on at most
$|V(G)-1|$ vertices belongs to $S$ if and only if it belongs to $\obs(G)$. 

Finally, consider a point-determining graph $L$ on $|V(G)|$ vertices.
By the first part of \cref{lem:point-det}, every full-homomorphism from
$L$ to $G$ is injective, thus $L$ admits a full $G$-colouring if and only
if $L\cong G$.  By similar arguments as in the paragraph above,
we conclude that $L$ is a minimal $G$-obstruction 
if and only if it is a minimal $H$-obstruction.
\end{proof}

\begin{corollary}\label{cor:obs*-determine}
Consider a pair of graphs $H_1$ and $H_2$. If $\obs^\ast(H_1) \cap \obs^\ast(H_2)
\neq\varnothing$, then $H_1\cong H_2$. 
\end{corollary}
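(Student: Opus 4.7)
The plan is to apply \cref{thm:obsH-obsG} to both $H_1$ and $H_2$ with a common $G\in\obs^\ast(H_1)\cap\obs^\ast(H_2)$, extract the equality $\obs(H_1)=\obs(H_2)$, and then convert a resulting full-homomorphism $H_1\to H_2$ into an isomorphism via \cref{lem:point-det} and a vertex count.

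First I would fix $G\in\obs^\ast(H_1)\cap\obs^\ast(H_2)$ and record that, from the definition of $\obs^\ast$, one has $|V(H_1)|=|V(G)|-1=|V(H_2)|$. Applying \cref{thm:obsH-obsG} once with $H=H_1$ and once with $H=H_2$ gives two descriptions of $\obs(G)$, namely $(\obs(H_i)\setminus\{G\})\cup\obs^\ast(G)$ for $i=1,2$. Since every graph in $\obs^\ast(G)$ has $|V(G)|+1$ vertices whereas every graph in $\obs(H_i)$ has at most $|V(G)|$ vertices, the two unions are disjoint, so I can cancel $\obs^\ast(G)$ and conclude $\obs(H_1)=\obs(H_2)$. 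Consequently the classes of full $H_1$-colourable and full $H_2$-colourable graphs coincide, and since $H_1$ trivially admits the identity full $H_1$-colouring, there is a full-homomorphism $\varphi\colon H_1\to H_2$.

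To promote $\varphi$ to an isomorphism I need $H_1$ to be point-determining, and this is built into the derivation of \cref{obs:iso}: the vertex $v$ with $G-v\cong H_1$ is produced by \cref{prop:rem-v} applied to the point-determining graph $G$, so $G-v$ is point-determining and $H_1\cong G-v$ inherits this property. The first part of \cref{lem:point-det} then forces $\varphi$ to be injective, and $|V(H_1)|=|V(H_2)|$ upgrades injectivity to bijectivity, i.e., to an isomorphism. The delicate point is that $\obs(H_1)=\obs(H_2)$ alone does not imply $H_1\cong H_2$ (for instance $K_{1,3}$ and $K_{2,2}$ are full-homomorphically equivalent but non-isomorphic graphs on the same number of vertices); point-determination together with matching cardinality is exactly what closes the gap.
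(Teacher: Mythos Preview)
Your argument is correct and follows essentially the same route as the paper: apply \cref{thm:obsH-obsG} to a common $G$ to obtain $\obs(H_1)=\obs(H_2)$, then use point-determination (which the paper notes follows from $\obs^\ast(H)\neq\varnothing$, and which you recover via the derivation of \cref{obs:iso}) together with \cref{lem:point-det} to upgrade a full-homomorphism to an isomorphism. The only cosmetic differences are that you make the disjointness in the cancellation of $\obs^\ast(G)$ explicit, and you use the equality $|V(H_1)|=|V(H_2)|$ with a single injective map where the paper instead invokes injective maps in both directions.
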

\begin{proof}
One soon notices that if $\obs^\ast(H) \neq \varnothing$, then $H$ is a point-determining graph. 
Let $G\in \obs^\ast(H_1) \cap \obs^\ast(H_2)$. By applying \cref{thm:obsH-obsG}
to $H_1$ and $G$, and to $H_2$ and $G$, we conclude that $\obs(H_1) = \obs(H_2)$.
Thus, $H_1$ admits a  full-homomorphism to $H_2$, and vice versa. Since $H_1$ and $H_2$
are point-determining graphs, both full-homomorphisms are injective (\cref{lem:point-det}),
and thus, they are isomorphisms. 
\end{proof}

In other words, \cref{cor:obs*-determine} asserts that if a graph $G$ is
a minimal obstruction of two smaller graphs, then these graphs are isomorphic.
Another immediate implication of \cref{thm:obsH-obsG} is the following one.

\begin{corollary}
Consider a  pair of  graphs $H$ and $G$. If $G\in \obs^\ast(H)$, then there
is at most one minimal $G$-obstruction on $|V(G)|$ vertices.
\end{corollary}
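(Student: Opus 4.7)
The plan is to deduce this as a short consequence of \cref{thm:obsH-obsG} combined with \cref{prop:obs*}. First I would observe that every graph in $\obs^\ast(G)$ has, by definition, exactly $|V(G)| + 1$ vertices, so a minimal $G$-obstruction $F$ on $|V(G)|$ vertices cannot lie in $\obs^\ast(G)$.

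Next I would apply \cref{thm:obsH-obsG}, which asserts that $\obs(G) = (\obs(H) \setminus \{G\}) \cup \obs^\ast(G)$. Combining this with the previous remark, any minimal $G$-obstruction $F$ with $|V(F)| = |V(G)|$ must belong to $\obs(H) \setminus \{G\}$. Since $|V(F)| = |V(G)| = |V(H)| + 1$, we in fact have $F \in \obs^\ast(H) \setminus \{G\}$, so each such $F$ is a minimal $H$-obstruction on $|V(H)|+1$ vertices that is not isomorphic to $G$.

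Finally, \cref{prop:obs*} guarantees $|\obs^\ast(H)| \le 2$, and since $G \in \obs^\ast(H)$ by hypothesis, this forces $|\obs^\ast(H) \setminus \{G\}| \le 1$. Hence there is at most one minimal $G$-obstruction on $|V(G)|$ vertices. I do not anticipate any substantive obstacle: the argument is essentially one line once the previous theorem is invoked, and the only point demanding minor care is the vertex-count bookkeeping that rules out the $\obs^\ast(G)$ summand.
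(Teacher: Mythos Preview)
Your argument is correct and matches the paper's intended route: the paper presents this corollary as an immediate consequence of \cref{thm:obsH-obsG} (implicitly together with \cref{prop:obs*}), and your proof spells out exactly that deduction with the appropriate vertex-count bookkeeping.
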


Recall that the orbit of a vertex $y\in V(G)$ is the set of vertices $x\in V(G)$ such
that there  is an automorphism $\varphi\colon G\to G$ such that $\varphi(y) = x$.
We denote the orbit of $y$ by $o(y)$. Clearly, if  $x\in o(y)$, then $G-x\cong G-y$. 

\begin{proposition}\label{prop:v-transitive}
Let $H$ be a non-complete connected vertex transitive graph. For any vertex $x$ 
of $H$, the following equalities hold
\[
obs(H) = obs(H-x) \setminus\{H\} \text{ and } obs(H-x) = obs(H) \cup \{H\}
\]
\end{proposition}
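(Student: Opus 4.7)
The plan is to verify that $H\in\obs^{\ast}(H-x)$ and then extract both equalities from \cref{thm:obsH-obsG} together with \cref{prop:obs*-regular}. This fits neatly into the framework built in this section, which is precisely designed to convert containments of the form $G\in\obs^{\ast}(H)$ into relations between $\obs(G)$ and $\obs(H)$.

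First I would show $H\in\obs^{\ast}(H-x)$. Vertex-transitivity ensures $H$ is regular, and $H$ is point-determining (this is implicit in the statement, since the second displayed equality requires $H\in\obs(H-x)$, which by \cref{lem:point-det} forces point-determining; for the intended application $H=C_n$ with $n\ge 5$ this is automatic). To see that $H$ admits no full $(H-x)$-colouring, any such homomorphism would be injective by \cref{lem:point-det}(1), contradicting $|V(H)|=|V(H-x)|+1$. For minimality, every proper induced subgraph $G'$ of $H$ avoids some vertex $y$; vertex-transitivity supplies an isomorphism $\psi\colon H-y\to H-x$, and the restriction of $\psi$ to $V(G')$ is an injective full-homomorphism $G'\to H-x$.

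With $H\in\obs^{\ast}(H-x)$ in hand, I would apply \cref{thm:obsH-obsG} with $H-x$ in the role of the base graph and $H$ in the role of ``$G$'', obtaining
\[
\obs(H)=\bigl(\obs(H-x)\setminus\{H\}\bigr)\cup\obs^{\ast}(H).
\]
Since $H$ is non-complete, connected, and regular, \cref{prop:obs*-regular} yields $\obs^{\ast}(H)=\varnothing$, which proves the first equality. The second follows immediately, since $H\in\obs(H-x)$:
\[
\obs(H-x)=\bigl(\obs(H-x)\setminus\{H\}\bigr)\cup\{H\}=\obs(H)\cup\{H\}.
\]
The main subtle step is the verification of $H\in\obs^{\ast}(H-x)$, in particular ensuring the point-determining hypothesis on $H$; once that is settled the rest is direct bookkeeping using the results already developed in this section.
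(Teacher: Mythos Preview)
Your proof is correct and follows essentially the same route as the paper: establish $H\in\obs^{\ast}(H-x)$ via vertex-transitivity, invoke \cref{thm:obsH-obsG}, and then kill $\obs^{\ast}(H)$ with \cref{prop:obs*-regular}. You are in fact more careful than the paper about the point-determining hypothesis on $H$ (which the paper simply asserts); your observation that this is implicit in the statement---and automatic for the intended application $H=C_n$, $n\ge 5$---is exactly the right way to handle it, since examples like $C_4$ show it is not a consequence of the stated hypotheses alone.
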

\begin{proof}
Since $H$ is vertex transitive, $H-x\cong H-y$ for any pair of vertices $x,y\in V(H)$. So,
every proper induced subgraph of $H$ admits a full $(H-x)$-colouring.  Since
$H$ is a point-determining graph, $H$ is not full $(H-x)$-colourable, so,
$H\in \obs^\ast(H-x)$. By \cref{thm:obsH-obsG}, we conclude that
$\obs(H) = \obs(H-x) \setminus\{H\} \cup \obs^\ast(H)$ so, using \cref{prop:obs*-regular}
we observe that $\obs^\ast(H) = \varnothing$.
\end{proof}

By applying \cref{prop:v-transitive} to cycles, we see that minimal
$C_n$-obstructions are determined by minimal $P_{n-1}$-obstructions,
and vice versa.

\begin{corollary}\label{cor:cycles}
For every positive integer $n$, $n\ge 5$, the following equalities hold
\[
\obs(C_n) = \obs(P_{n-1})\setminus\{C_n\} \text{ and } 
\obs(P_{n-1}) = \obs(C_n) \cup\{C_n\}.
\]
\end{corollary}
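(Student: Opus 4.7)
The plan is to deduce this corollary as an immediate consequence of \cref{prop:v-transitive} applied with $H = C_n$. The whole strategy rests on the fact that cycles are about as well-behaved a family of graphs as one could hope: they are vertex-transitive, and deleting any vertex from an $n$-cycle gives the same graph (namely $P_{n-1}$) up to isomorphism.

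First, I would verify that $H = C_n$ satisfies the hypotheses of \cref{prop:v-transitive}. For $n \ge 5$ the cycle $C_n$ is connected, is vertex-transitive (the cyclic rotations act transitively on $V(C_n)$), and is non-complete (since $C_3 = K_3$ is the only complete cycle). It is worth also noting that $C_n$ is point-determining when $n \ge 5$: distinct vertices have distinct neighbourhoods because their neighbourhoods lie at different positions along the cycle. This is the reason for the restriction $n \ge 5$; the case $n = 4$ is excluded because $C_4$ has antipodal false twins and $n = 3$ is excluded because $C_3 = K_3$ is complete.

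Second, I would record the structural observation that for every vertex $x$ of $C_n$, the graph $C_n - x$ is isomorphic to $P_{n-1}$. This is immediate from the definition of $C_n$: deleting any single vertex from a cycle on $n$ vertices leaves a path on $n-1$ vertices.

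Finally, I would invoke \cref{prop:v-transitive} with $H = C_n$ and any vertex $x \in V(C_n)$. The proposition yields
\[
\obs(C_n) = \obs(C_n - x) \setminus \{C_n\} \quad\text{and}\quad \obs(C_n - x) = \obs(C_n) \cup \{C_n\},
\]
and substituting the isomorphism $C_n - x \cong P_{n-1}$ gives precisely the two claimed equalities. There is no real obstacle here: the only point that deserves care is checking the hypotheses (in particular vertex-transitivity and the exclusion $n \ge 5$), after which the corollary follows by direct substitution into \cref{prop:v-transitive}.
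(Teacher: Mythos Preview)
Your proposal is correct and matches the paper's approach exactly: the paper does not even write out a proof for this corollary, stating it simply as the result of ``applying \cref{prop:v-transitive} to cycles.'' Your verification of the hypotheses (connected, vertex-transitive, non-complete for $n\ge 5$) and the observation $C_n - x \cong P_{n-1}$ are precisely what is implicit in that one-line justification.
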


The following characterization of minimal $C_n$-obstructions follows
from \cref{cor:cycles,thm:paths}.

\begin{theorem}\label{thm:cycles}
For every positive integer $n$ the set $\obs(C_n)$ of minimal $C_n$-obstructions
is the union $C(n-2)\cup LF(n-1) \cup O(n-1)$.
\end{theorem}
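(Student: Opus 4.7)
The plan is to combine Corollary~\ref{cor:cycles} with Theorem~\ref{thm:paths} and then perform a small amount of set bookkeeping. Corollary~\ref{cor:cycles} (valid for $n\ge 5$) gives $\obs(C_n)=\obs(P_{n-1})\setminus\{C_n\}$, and substituting the description of $\obs(P_{n-1})$ from Theorem~\ref{thm:paths} turns this into $\obs(C_n)=(C(n-1)\cup LF(n-1)\cup O(n-1))\setminus\{C_n\}$. The remaining task is to rewrite this in the notation announced in the statement.

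First I would rule out $C_n$ from $LF(n-1)\cup O(n-1)$: every graph in $LF(n-1)$ is a linear forest and every graph in $O(n-1)$ is one of $A$, $B$, $E$ (see Figure~\ref{fig:ABE}), none of which is a cycle. Hence the deletion of $C_n$ affects only the cycle part of the union. Comparing the defining sets $C(n-1)=\{C_3\}\cup\{C_m:\,5\le m\le n\}$ and $C(n-2)=\{C_3\}\cup\{C_m:\,5\le m\le n-1\}$ gives $C(n-1)\setminus\{C_n\}=C(n-2)$, so stringing the equalities together yields the announced identity.

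The main (and essentially the only) obstacle is a boundary issue: Corollary~\ref{cor:cycles} was stated under the hypothesis $n\ge 5$, so the chain above does not apply verbatim to $n\in\{3,4\}$. For $n=4$ one can check the equality by direct inspection, noting that both sides reduce to $\{C_3,K_1+K_2\}$; the case $n=3$ is more delicate because $C_3=K_3$ is complete, so the vertex-transitive machinery of Proposition~\ref{prop:v-transitive} does not apply, and $\obs(K_3)=\{K_1+K_2,K_4\}$ must instead be extracted directly from Proposition~\ref{prop:obs*-regular}. Apart from this boundary check, the proof is a pure substitution.
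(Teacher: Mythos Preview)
Your argument is correct and follows exactly the same route as the paper: invoke Corollary~\ref{cor:cycles} to get $\obs(C_n)=\obs(P_{n-1})\setminus\{C_n\}$, substitute Theorem~\ref{thm:paths}, and use $C(n-1)\setminus\{C_n\}=C(n-2)$. Your explicit treatment of the small cases $n\in\{3,4\}$ is in fact more careful than the paper's own proof, which silently relies on Corollary~\ref{cor:cycles} despite its hypothesis $n\ge 5$.
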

\begin{proof}
By \cref{cor:cycles}, the equality $\obs(C_n) = \obs(P_{n-1})\setminus \{C_n\}$ holds. 
By \cref{thm:paths}, the set of  minimal $P_n$-obstructions is 
$C(n-1)\cup LF(n-1) \cup O(n-1)$. Finally, by definition of $C(n)$, the
equality $C(n-2) = C(n-1)\setminus\{C_n\}$ holds, and so, the
claim follows.
\end{proof}

\begin{corollary}
A graph $G$ is full $C_5$-colourable if and only if it is
$\{C_3,K_1+P_4,2K_2\}$-free.
\end{corollary}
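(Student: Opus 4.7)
The plan is to invoke \cref{thm:cycles} with $n=5$ and compute $\obs(C_5)$ explicitly, then observe that $G$ is full $C_5$-colourable if and only if $G$ contains no induced subgraph isomorphic to any member of $\obs(C_5)$, which is the definition of being $\obs(C_5)$-free. So the whole corollary reduces to showing that $\obs(C_5) = \{C_3, K_1+P_4, 2K_2\}$.

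By \cref{thm:cycles}, $\obs(C_5) = C(3)\cup LF(4) \cup O(4)$. I would first dispatch the two easy pieces. By the definition of $C(n)$, the set $C(3)$ consists of $C_m$ with $m=3$ or $5\le m\le 4$; only $C_3$ survives, so $C(3)=\{C_3\}$. By the definition of $O(n)$, since $4\le 4$ we have $O(4)=\varnothing$.

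Next I would unpack $LF(4)=LF_1(4)\cup LF_2(4)\cup LF_3(4)$ by solving the corresponding integer equations. For $LF_1(4)$ the condition is $3m_2 = 6$, giving $m_2=2$, so $LF_1(4)=\{2K_2\}$. For $LF_2(4)$ the condition (after correcting the obvious typo $m_3\mapsto m_2$) is $3m_2+5m_4 = 5$, whose only non-negative solution is $(m_2,m_4)=(0,1)$, so $LF_2(4)=\{K_1+P_4\}$. For $LF_3(4)$ the condition is $3m_2+5m_4+7m_6 = 4$, which plainly has no non-negative integer solution, so $LF_3(4)=\varnothing$. Combining gives $\obs(C_5)=\{C_3,\,2K_2,\,K_1+P_4\}$, and the corollary follows immediately.

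There is really no hard step here: everything is arithmetic on the three defining equations for $LF_i(n)$. The only mild care needed is to double-check that the apparent typo in the definition of $LF_2(n)$ is a typo (this is clear from the proof of \cref{prop:LF-suf-nec}, where the case $\mu(L)=n+2$ with $n_k\in\{1,2,4\}$ and $m_1=1$ gives $1 + 3m_2 + 5m_4 = n+2$, i.e.\ $3m_2 + 5m_4 = n+1$), and to verify that no member of $\obs(C_5)$ has been over-counted or omitted.
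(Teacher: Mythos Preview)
Your proposal is correct and is exactly the intended argument: the paper states this corollary without proof, as an immediate specialization of \cref{thm:cycles} to $n=5$, and your computation of $C(3)\cup LF(4)\cup O(4)=\{C_3,\,2K_2,\,K_1+P_4\}$ (including the handling of the $m_3\mapsto m_2$ typo in $LF_2(n)$) matches the entry for $n=5$ in Table~\ref{tab:small-n}.
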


To conclude this section, we list all $C_n$-minimal obstructions
for small integers  $n$ in Table~\ref{tab:small-n}.

\begin{table}[ht!]
\begin{center}
    \begin{tabular}{| c | l | l |}
    \hline
    $n$ & Linear forests in $\obs(C_n)$ & Other minimal $C_n$-obstructions\\ \hline
    $5$ & $K_1+P_4$ and $2K_2$ & $C_3$\\ \hline
    $6$ &  $K_1+P_4$ and $K_1 + 2K_2$ & $C_3,~C_5$ and $B$\\ \hline
    $7$ &  $K_1+2K_2$ & $C_3,~C_5,~C_6,~A$ and $B$\\ \hline
    $8$ &   $3K_2$, $K_1+K_2+P_4$, and $K_1+P_6$ & $C_3,~C_5,~C_6,~C_7,~A,~B$
     and $E$\\ \hline
    $9$ &   $K_1+3K_2$ and $K_1+ K_2 +P_4$ & $C_3,~C_5,~C_6,~C_8,~A,~B$
     and $E$\\ \hline
    $10$ & $K_1 + 2P_4$ and $K_1 + 3K_2$ &
    $C_3,~C_5,~C_6,~C_7,~C_8,~C_9,~A,~B$ and $E$ \\ \hline
    \end{tabular}
    \caption{To the left, the number of vertices in a cycle $C$. In the middle, 
    the linear forests which are minimal $C$-obstructions. To the right, all
    minimal $C$-obstructions that are not linear forests.}
    \label{tab:small-n}
    \end{center}
  \end{table}

\section{Conclusions}
\label{sec:conclusions}

\cref{prop:obs*-regular} asserts that for a  connected regular graph $H$ the set
$\obs^\ast(H)$ is empty if and only if $H$ is not a complete graph. Also, if $H$ is
obtained from a vertex-transitive graph $G$ by removing one vertex, then
$G\in \obs^\ast(H)$ so, $\obs^\ast(H)\neq \varnothing$. A possible
interesting question to investigate is the following one. 

\begin{question}
Is there a meaningful  characterization of those graphs $H$ for which
$\obs^\ast(H) \neq \varnothing$? 
\end{question}

\cref{thm:obsH-obsG} suggests that there is a close relation between 
a graph $H$ such that $\obs^\ast(H)\neq \varnothing$ and a graph
$G\in \obs^\ast(H)$. 
For this reason, we believe that another possible interesting problem
is determining which graphs $G$ are a minimal $H$-obstruction of
size $|V(H)|+1$ for some graph $H$. 

\begin{question}\label{qst:2}
For which graphs $G$ there is a graph $H$ such that
$G$ is a minimal $H$-obstruction in $\obs^\ast(H)$?
\end{question} 

We briefly observe that this problem is not interesting if we remove the
restriction that $|V(G)| = |V(H)|+1$.

\begin{proposition}
For every point-determining connected graph $G$, there is a graph $H$
such that $G$ is a minimal $H$-obstruction. 
\end{proposition}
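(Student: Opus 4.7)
The plan is to exhibit an explicit $H$ as the disjoint union
\[
H := \bigsqcup_{v \in V(G)} (G - v),
\]
placing one copy of each one-vertex-deletion of $G$ as a summand, and then verify the two clauses in the definition of a minimal $H$-obstruction separately. Both verifications reduce to the interaction of point-determination with connectedness of $G$.

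To show that $G$ admits no full $H$-colouring, I would argue by contradiction. Suppose $\varphi \colon G \to H$ is a full-homomorphism. Since $G$ is point-determining, the first part of \cref{lem:point-det} gives that $\varphi$ is injective, so $|\varphi(V(G))| = |V(G)|$. On the other hand, because $G$ is connected, $\varphi(V(G))$ induces a connected subgraph of $H$ and thus lies inside a single connected component of $H$; in particular, it is contained in one of the summands $G - v$. But this summand has only $|V(G)| - 1$ vertices, yielding the desired contradiction.

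For the minimality clause, I would show that every proper induced subgraph $G' = G[S]$, with $S \subsetneq V(G)$, admits a full $H$-colouring. Picking any $v \in V(G) \setminus S$, the subgraph $G'$ coincides with $(G - v)[S]$, so the identity on $S$ is a full-homomorphism $G' \to G - v$; composing with the inclusion of the summand $G - v$ into $H$ yields a full-homomorphism $G' \to H$. The only place where the hypotheses are genuinely used is in the first verification, where both point-determination (to force injectivity) and connectedness (to trap the image in a single summand) are necessary; without either, the size argument would collapse because the image could spread over several summands. I therefore do not expect a substantive obstacle in turning this sketch into a complete proof.
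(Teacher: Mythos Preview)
Your proof is correct and follows essentially the same approach as the paper's: the paper also takes $H$ to be a disjoint union indexed by the vertices of $G$, uses point-determination via \cref{lem:point-det} to force injectivity, and uses connectedness of $G$ together with the size of each summand to rule out a full $H$-colouring. The only cosmetic difference is that the paper takes the full-core of each $G-x$ as the summand rather than $G-x$ itself, which changes nothing in the argument.
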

\begin{proof}
Let $G$ be as in the hypothesis, and for each vertex $x\in V(G)$ let
$H_x$ be the full-core of $G-x$. Finally, let $H$ be the disjoint union
$\sum_{x\in V(G)}H_x$. Since every connected component of $H$
has less than $|V(G)|$ vertices, there is no injective full-homomorphism
from $G$ to $H$. By the first part of \cref{lem:point-det}, and since
$G$ is a point-determining graph, we conclude that $G$ does not admit
a full $H$-colouring. It is not hard to see that, by the choice of 
$H_x$, for every vertex $x\in V(G)$ there is a full $H$-colouring
of $G-x$. The claim follows.
\end{proof}

The following observation shows that, in the case of regular graphs,
\cref{qst:2} has a meaningful answer. 

\begin{proposition}
Let $G$ be a point-determining regular graph. There is a graph $H$ such that 
$G\in\obs^\ast(H)$ if and only if $G$ is a vertex transitive graph.
\end{proposition}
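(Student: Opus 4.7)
My plan is to handle the two implications separately, both by working with $H := G - u$ for a chosen vertex $u \in V(G)$. For the sufficient (``vertex-transitive $\Rightarrow$ $\exists H$'') direction I would show directly that $G \in \obs^\ast(G - u)$. For the necessary direction I would first show $G - u \cong H$ for every $u$ and then combine this with the regularity of $G$ to build automorphisms of $G$.

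For the backward direction, assume $G$ is vertex-transitive and fix any $u \in V(G)$, setting $H := G - u$. Since $G$ is point-determining, \cref{lem:point-det} forces every full-homomorphism $G \to H$ to be injective; but $|V(G)| = |V(H)| + 1$, so no such map exists. For any non-empty $S \subseteq V(G)$ and any $s \in S$, vertex-transitivity gives $G - s \cong G - u = H$, and since $G - S$ is an induced subgraph of $G - s$, it is isomorphic to an induced subgraph of $H$; the corresponding inclusion is a full-homomorphism $G - S \to H$. This shows that $G$ is a minimal $H$-obstruction of size $|V(H)| + 1$, i.e.\ $G \in \obs^\ast(H)$.

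For the forward direction, assume $G \in \obs^\ast(H)$. I would first establish that $G - u \cong H$ for every $u \in V(G)$: by \cref{prop:nucleus-regular} the subgraph $G - u$ is point-determining; by minimality of $G$ it admits a full-homomorphism to $H$; by \cref{lem:point-det} this map is injective; and since $|V(G - u)| = |V(H)|$, it is in fact an isomorphism. Now, given $u, v \in V(G)$ and an isomorphism $\phi\colon G - u \to G - v$, I would extend $\phi$ to a bijection $\Phi\colon V(G) \to V(G)$ by declaring $\Phi(u) := v$. The key observation is that if $G$ is $k$-regular, then the vertices of degree $k - 1$ in $G - u$ are exactly those of $N_G(u)$, and similarly for $G - v$; since $\phi$ preserves degrees, $\phi(N_G(u)) = N_G(v)$. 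This identity is precisely what is needed to check that pairs of the form $\{u, y\}$ are edge-preserved by $\Phi$, while pairs inside $V(G) \setminus \{u\}$ are edge-preserved by $\phi$ itself. Hence $\Phi$ is an automorphism of $G$ with $\Phi(u) = v$, so $G$ is vertex-transitive.

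The main obstacle is the extension step in the forward direction: for a general graph an isomorphism $G - u \to G - v$ need not extend to an automorphism of $G$ (as witnessed by the well-known pseudosimilar vertex pairs in the literature), so without extra structure one cannot recover vertex-transitivity from the mere isomorphism of vertex-deleted subgraphs. The regularity hypothesis is exactly what resolves this, since it turns $N_G(u)$ into an intrinsic invariant of $G - u$ (the set of low-degree vertices), and any isomorphism $G - u \to G - v$ must therefore map it onto $N_G(v)$; once that is established, the rest of the argument reduces to a bookkeeping check.
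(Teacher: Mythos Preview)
Your proof is correct and in fact more complete than the paper's own argument. For the forward direction, the paper follows exactly your first step---using \cref{prop:nucleus-regular} and \cref{lem:point-det} to conclude that $G-u\cong H$ for every $u$---but then simply asserts that ``all vertex-deleted subgraphs of $G$ are isomorphic, and thus $G$ is a vertex transitive graph,'' without further justification. Your extension argument, identifying $N_G(u)$ intrinsically as the set of degree-$(k-1)$ vertices of $G-u$ and using this to lift an isomorphism $G-u\to G-v$ to an automorphism of $G$, is precisely what is needed to close this gap, and your remark about pseudosimilar pairs correctly pinpoints why the regularity hypothesis is essential here. The paper's proof also omits the backward direction entirely (it is asserted informally earlier in \cref{sec:conclusions}), whereas you supply one.
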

\begin{proof}
By \cref{prop:nucleus-regular}, if $G$ is a point-determining regular graph,
then for each $x\in V(G)$ the induced subgraph $G-x$ is point-determining.
So, if $|V(G)| = |V(H)|+1$, then by the first part of \cref{lem:point-det}, for
each $x\in V(G)$, every full-homomorphism from $G-x$ to $H$ is an
isomorphism. Hence,  all vertex-deleted subgraphs of $G$ are isomorphic,
and thus $G$ is a vertex transitive graph.
\end{proof}

As a final implication of this work, notice that for every positive
integer $n$, there are at most three graphs in $O(n)$, at most $n-2$ graphs
in $C(n)$, and as many graphs in $LF(n)$ as non-negative solutions
to the diophantine equations, $3x = n+2$, $3x + 5y = n+1$, and
$3x +5y + 7z = n$. It is not hard to observe that there are $O(n^{k-1})$
solutions to each of these equations, where $k$ is the number of variables
in the corresponding equation. 
Hence, there are quadratically many linear forests in $LF(n)$. 
These arguments, together with \cref{thm:paths,thm:cycles}, imply
that the following statement holds.

\begin{corollary}
For every positive integer $n$, there are quadratically
many (with respect to $n$) minimal $P_n$-obstructions and
minimal $C_n$-obstructions.
\end{corollary}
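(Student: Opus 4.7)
The plan is to apply \cref{thm:paths} and \cref{thm:cycles}, which express $\obs(P_n)$ and $\obs(C_n)$ as unions of $C(\cdot)$, $O(\cdot)$, and $LF(\cdot)=LF_1(\cdot)\cup LF_2(\cdot)\cup LF_3(\cdot)$ (with the argument shifted by at most two between the two theorems), and then to show that $|LF_3(n)|=\Theta(n^2)$ while all other summands are $O(n)$ or smaller. The statement for $\obs(C_n)$ will follow from the statement for $\obs(P_n)$ since the index shifts are constant.

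For the upper bound, I would read off each piece from its definition: $|O(n)|\le 3$, $|C(n)|\le n-1$, and $|LF_1(n)|\le 1$. The set $LF_2(n)$ is parametrised by a linear Diophantine equation in two variables bounded by $O(n)$ each, so $|LF_2(n)|=O(n)$. For $LF_3(n)$, fixing $m_4\in\{0,\dots,\lfloor n/5\rfloor\}$ and $m_6\in\{0,\dots,\lfloor n/7\rfloor\}$ determines $m_2$ uniquely when it exists, giving $|LF_3(n)|\le(\lfloor n/5\rfloor+1)(\lfloor n/7\rfloor+1)=O(n^2)$. Summing yields $|\obs(P_n)|=O(n^2)$ and, through \cref{thm:cycles}, $|\obs(C_n)|=O(n^2)$.

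The main obstacle is the matching lower bound $|LF_3(n)|\ge cn^2$ for some $c>0$ and all sufficiently large $n$. I would restrict $m_6$ to $\{0,1,\dots,\lfloor n/21\rfloor\}$, which guarantees $n-7m_6\ge 2n/3$. For each such $m_6$, I let $m_4$ range over $\{0,1,\dots,\lfloor (n-7m_6)/15\rfloor\}$, which forces $n-5m_4-7m_6\ge 0$. Since $5\equiv 2\pmod 3$ is invertible, the map $m_4\mapsto 5m_4\pmod 3$ cycles through every residue over any three consecutive integers, so at least $\lfloor\lfloor(n-7m_6)/15\rfloor/3\rfloor$ of these $m_4$ satisfy the congruence $n-5m_4-7m_6\equiv 0\pmod 3$; for every such pair there is a unique $m_2\ge 0$ completing $3m_2+5m_4+7m_6=n$. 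Consequently
\[
|LF_3(n)|\;\ge\;\sum_{m_6=0}^{\lfloor n/21\rfloor}\left\lfloor\frac{\lfloor(n-7m_6)/15\rfloor}{3}\right\rfloor,
\]
a sum of $\Theta(n)$ terms each of size $\Theta(n)$, hence $\Theta(n^2)$.

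The upper bound is essentially immediate from the defining Diophantine equations; the real work is the modular-cycling argument in the lower bound. Once the range of $m_6$ is chosen to leave ample room for $m_4$, the only thing to verify carefully is that rounding losses in the inner floor do not swallow the leading $\Theta(n)$, which I would handle by the elementary estimate above.
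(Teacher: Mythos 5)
Your proof is correct and follows essentially the same route as the paper: both count the non-negative solutions of the Diophantine equations $3x=n+2$, $3x+5y=n+1$ and $3x+5y+7z=n$ defining $LF_1$, $LF_2$, $LF_3$, and combine this with the linear bounds on $C(\cdot)$ and $O(\cdot)$, transferring to $\obs(C_n)$ via the constant index shift. The only difference is that the paper contents itself with the remark that these equations have $O(n^{k-1})$ solutions, whereas you also carry out a matching $\Omega(n^2)$ lower bound for $LF_3(n)$ via the modular-cycling argument; that extra step is sound.
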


The well-defined and simple structure of paths and cycles might be the reason
why their number of minimal obstructions is polynomially bounded (with respect
to $n$). Nonetheless, having made this observation, it is natural to ask about
the cardinality  of $\obs(H)$ in terms of the  cardinality of the vertex set of $H$.

\begin{question}
Is there a polynomial $p(n)$ such that the size of $\obs(H)$ is bounded
by $p(|V(H)|)$ for each graph $H$?
\end{question}

\section*{Acknowledgements}
The author is deeply grateful to C\'esar Hern\'andez-Cruz for several 
discussions that helped developing this work. In particular, 
C\'esar proposed the problems of describing the minimal path and
cycle obstructions. The author also thanks the anonymous reviewer whose
valuable feedback made the proofs more efficient and easier to read.

\end{document}